\def\N{\mathbb N}
\def\Z{\mathbb Z}
\def\R{\mathbb R}
\def\C{\mathbb C}
\def\a{\alpha}
\def\d{\partial}
\theoremstyle{plain}
\newtheorem{teor}{Theorem}[section]
\newtheorem{lema}[teor]{Lemma}
\newtheorem{prop}[teor]{Proposition}
\theoremstyle{definition}
\newtheorem{eje}{Example}[section]
\newtheorem{nota}[teor]{Remark}
\title[On characterizations of Appell and Sheffer sequences and examples]{Appell and Sheffer sequences: on their characterizations through functionals and examples}
\author{Sergio A. Carrillo}
\author{Miguel Hurtado}
\address{Programa de matem\'{a}ticas, Universidad Sergio Arboleda, Calle 74, $\#$ 14-14, Bogot\'{a}, Colombia.}\email{sergio.carrillo@usa.edu.co, miguel.hurtado01@usa.edu.co}
\thanks{First author is supported by Ministerio de Econom\'{i}a y Competitividad from Spain, under the Project ``M\'{e}todos asint\'{o}ticos, algebraicos y geom\'{e}tricos en foliaciones singulares y sistemas din\'{a}micos" (Ref.: PID2019-105621GB-I00) and Univ. Sergio Arboleda project IN.BG.086.20.002.}
\keywords{Sheffer and Appell sequences, Bernoulli, Euler and Hermite $d$-orthogonal polynomials}
\subjclass[2010]{05A40, 11B83, 11B68}
\begin{document}
\maketitle

\begin{abstract} The aim of this paper is to present a new simple recurrence for Appell and Sheffer sequences in terms of the linear functional that defines them, and to explain how this is equivalent to several well-known characterizations appearing in the literature. We also give several examples, including integral representations of the inverse operators associated to Bernoulli and Euler polynomials, and a new integral representation of the re-scaled Hermite $d$-orthogonal polynomials generalizing the Weierstrass operator related to the Hermite polynomials.
\end{abstract}

\section{Introduction}

A remarkable class of polynomials are the Appell sequences having applications in Number theory, Probability, and the theory of functions. They vastly generalize monomials arising naturally from Taylor's formula with integral rest, see \cite[Chapitre VI]{Bourbaki} \cite[Appendix A]{Candel} and they include famous polynomial sequences. For instance, the Bernoulli polynomials useful in numerical integration and asymptotic analysis --Euler-Maclaurin formula \cite{Olver}--; or the Euler polynomials which lead to Euler-Boole formula \cite{Borwein}. On the other hand, a wide class of Appell sequences provides special values of transcendental functions, as recently proved in \cite{Navas2018}, extending the well-known case of the Bernoulli polynomials as the values at negative integers of the Hurwitz zeta function. 

Appell sequences are a subclass of Sheffer sequences --\textit{type zero} polynomials-- which were treated by I. M. Sheffer as solutions of families of differential and difference equations \cite{Sheffer1939}. Later on, their study, leaded by Rota and Roman, developed in what it is known today as Umbral Calculus \cite{Roman,RomanRota,Rota}. In contrast, a more recent approach has been made using matrix and determinantal representations, see e.g., \cite{Aceto2015,Aceto2017,CosLongo2010,CosLongo2014,Wang2014,Yang}. Also, current research has focussed on special sequences \cite{Drissi} and other alternative descriptions of the theory, for instance, through random variables \cite{Ta2015,AdellLekuona17}.

Our goal in this work is to obtain a new simple recursion for Appell sequences (Theorem \ref{Thm New}), their expansions in terms of an arbitrary delta operator (Proposition \ref{Prop. 2}) -as presented in \cite{AdellLekuona} for difference operators-, and several examples. Our exposition is based on Umbral Calculus, briefly recalled in Sections \ref{Sec:2} and \ref{Sec:3}. Using this tool, we obtain in Section \ref{Sec:3} a new recurrence for Sheffer sequences (Theorem \ref{Coro Last}) using left-inverses of a delta operator and deduce Theorem \ref{Thm New} as a particular case. In addition, we clarify how the characterization of Sheffer and Appell sequences through a linear functional and a linear operator are naturally equivalent (Theorem \ref{Thm2}) --a general fact used systematically in specific cases, see, e.g., \cite{Borwein}, \cite[Theorems 2.2-2.3]{Drissi}, \cite{Hassen}, \cite{Tempesta}.-- Our examples are presented in Section \ref{Sec:5}, in particular, we generalize the Weierstrass operator for classical Hermite polynomials by using Ecalle's accelerator functions to obtain an integral representation of the re-scaled Hermite $d$-orthogonal polynomials \cite{Douak96} (Proposition \ref{Prop. Hnd}). We also include the integral representations of the inverse operators associated with the Bernoulli and Euler polynomials (Propositions \ref{Prop. Inv Ber} and \ref{Prop Inv Euler}). These formulas are closely related to their moment expansions \cite{Sun2007,Ta2015}, but here we deduce them in a direct elementary way. Finally, we collect in Table \ref{Table} multiple examples of Appell sequences scattered in the literature, with their respective linear functional and characterization.

\section{Preliminaries on Sheffer sequences}\label{Sec:2}

We briefly recall some characterizations of Sheffer sequences and set the notations used along the paper. Our summary is based on the expositions \cite{Rota,Roman} of Umbral Calculus whose cornerstone is the twofold identification of formal power series in one variable $\C[[t]]$ as the linear functionals, as well as the shift-invariant linear operators of the ring of univariate polynomials $\C[x]$. 

Let $\d=\d_x$ be usual differentiation and $T_a:\C[x]\to\C[x]$,  $T_a(p)(x)=p(x+a)$ the shift-operator indexed by $a\in\C$. A linear operator $\mathfrak{Q}:\C[x]\to \C[x]$ is \textit{shift-invariant} if $\mathfrak{Q}\circ T_a=T_a\circ \mathfrak{Q}$, for all $a\in\C$. The set $\Sigma$ of these operators acquires a commutative ring structure via the isomorphism \begin{equation}\label{Eq. Ring iso}
\iota_{\d}:\C[[t]]\to\Sigma,\qquad  \overline{B}(t)=\sum_{n=0}^\infty \frac{\hat{b}_n}{n!}t^n\longmapsto \mathfrak{Q}=\overline{B}(\d)=\sum_{n=0}^\infty \frac{\hat{b}_n}{n!}\d^n,
\end{equation} where the product of formal power series corresponds to composition of operators. In fact,  $\hat{b}_n=\mathfrak{Q}(x^n)(0)$. Extending  $\mathfrak{Q}$ to $\C[x][[t]]$ by  $\mathfrak{Q}\left(\sum_{n=0}^\infty p_n t^n \right)=\sum_{n=0}^\infty \mathfrak{Q}(p_n)t^n$ we recover $\overline{B}(t)$ using the value of $\mathfrak{Q}$ at the exponential. Indeed, $\mathfrak{Q}(e^{xt})(x,t)=\overline{B}(t)e^{xt}$ and therefore $\iota_\d^{-1}(\mathfrak{Q})=\mathfrak{Q}(e^{xt})(0,t)$.

An operator $\mathfrak{Q}\in\Sigma$ is called a \textit{delta operator} if $\mathfrak{Q}(x)$ is a non-zero constant. In this case $\mathfrak{Q}(a)=0$, for all $a\in\C$ and $\text{deg}(\mathfrak{Q}(p))=\text{deg}(p)-1.$ Thus the series $\overline{B}(t)$ starts at $n=1$ with $\hat{b}_1=\mathfrak{Q}(x)\neq 0$ and it admits a compositional inverse $B(t)=\sum_{n=1}^\infty \frac{b_n}{n!}t^n$. The sequence of polynomials $\{q_n(x)\}_{n\geq0}$ induced through the expansion (\ref{Eq. B(t)}) below are called \textit{basic polynomials} of $\mathfrak{Q}$, \begin{equation}\label{Eq. B(t)} e^{xB(t)}=\sum_{n=0}^\infty \frac{q_n(x)}{n!}t^n,\quad  \text{ and they satisfy }  q_n(x+x_0)=\sum_{j=0}^n \binom{n}{j} q_j(x)q_{n-j}(x_0). 
\end{equation} Thus, they are of \textit{binomial type}. Moreover, they are characterized by the properties \begin{equation}\label{Eq. Basic poly Q}
q_0(x)=1,\quad q_n(0)=0, \text{ and} \quad \mathfrak{Q}(q_n)(x)=nq_{n-1}(x), \text{ for all } n\geq1.
\end{equation}

A delta operator $\mathfrak{Q}$ also induces a ring isomorphism $\iota_{\mathfrak{Q}}:\C[[t]]\to\Sigma$ by $A(t)\to A(\mathfrak{Q})$, and if $\mathfrak{S}=A(\mathfrak{Q})=\sum_{n=0}^\infty \frac{a_n}{n!}\mathfrak{Q}^n$, then $a_n=\mathfrak{S}(q_n)(0)$. Therefore $
\mathfrak{S}(e^{xB(t)})(x,t)=A(t) e^{xB(t)}$ and we recover  $\iota_{\mathfrak{Q}}^{-1}(\mathfrak{S})=\mathfrak{S}(e^{xB(t)})(0,t)$. Also, note that $\mathfrak{S}$ is invertible if and only if $a_0=\mathfrak{S}(1)\neq0$, and $\mathfrak{S}^{-1}=1/A(\mathfrak{Q})$, where $1/A(t)$ is the reciprocal of $A(t)$. 

We say $\{s_n(x)\}_{n\geq0}$ is a $\mathfrak{Q}$--\textit{Sheffer sequence} for the delta operator $\mathfrak{Q}$  if $s_0\neq0$ is constant and \begin{equation}\label{Eq. Qsn} \mathfrak{Q}(s_n)(x)=n s_{n-1}(x), \text{ for all }n\geq 1.
\end{equation} These sequences admit several characterizations. First, there is an invertible $\mathfrak{S}\in\Sigma$ satisfying \begin{equation}\label{Eq. Ssn=qn}
\mathfrak{S}(s_n)=q_n,\text{ for all } n\geq0, 
\end{equation}  Second, its exponential generating series has the form \begin{equation}\label{Eq. Exp sn}
\sum_{n=0}^\infty \frac{s_n(x)}{n!}t^n=\frac{e^{xB(t)}}{A(t)},
\end{equation} where $A(t)\in\C[[t]]$ has a reciprocal, i.e., $A(0)\neq 0$. Third, the sequence satisfies \begin{equation}\label{Eq. Binom sn}
s_{n}(x+x_0)=\sum _{k=0}^{n} \binom{n}{k} s_{k}(x)q_{n-k}(x_0), \text{ and } s_n(x)=\sum_{k=0}^n \binom{n}{k} s_k(0) q_{n-k}(x), \text{ for all } n\geq0.
\end{equation} Conditions (\ref{Eq. Qsn}), (\ref{Eq. Ssn=qn}), (\ref{Eq. Exp sn}) and both equations in (\ref{Eq. Binom sn}) are equivalent to each other as can be checked. The relevant relations are $\mathfrak{S}=\iota_{\mathfrak{Q}}(A)$ and $1/A(t)=\sum_{n=0}^\infty s_n(0) t^n/n!$. We highlight that $\mathfrak{S}$ is uniquely associated to $\{s_n\}_{n\geq0}$ which will be referred as the $\mathfrak{Q}$--Sheffer sequence \textit{relative} to $\mathfrak{S}$ (\textit{$(\mathfrak{Q},\mathfrak{S})$--Sheffer} for short). Finally, after repeated application of $\mathfrak{Q}$ to (\ref{Eq. Ssn=qn}) followed by evaluating at $x=0$, we find that \begin{equation}\label{Eq. SQm}
\mathfrak{S}(\mathfrak{Q}^{ m}(s_n))(0)=n!\delta_{n,m},\quad \text{ for all } n,m\geq0,
\end{equation} where $\delta_{n,m}$ is the Kronecker delta.

\section{The use of linear functionals}\label{Sec:3}

Another characterization of Sheffer sequences is available through functionals of $\C[x]$. It is based on the identification of the dual space $\C[x]^\ast$ with $\Sigma$ --and thus with $\C[[t]]$  via (\ref{Eq. Ring iso})--.

\begin{lema}\label{Prop. LL} We have the linear isomorphism $\mathfrak{j}:\C[x]^\ast\to \Sigma$ given by 
	\begin{equation}\label{Eq. L LL}
	L\longmapsto \mathfrak{L}(p)(x):=L(T_{x}(p)),\quad \text{ and having as inverse }\quad \mathfrak{L}\longmapsto L(p)=\mathfrak{L}(p)(0),\quad p\in\C[x].
	\end{equation}
\end{lema}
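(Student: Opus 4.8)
The plan is to establish $\mathfrak{j}$ as a linear map, then check it is a bijection by exhibiting the stated inverse. First I would verify that for each $L \in \C[x]^\ast$ the assignment $p \mapsto \mathfrak{L}(p)(x) := L(T_x(p))$ actually lands in $\Sigma$. The key point is that $T_x(p)$ should be read as the polynomial in two variables $(p(x+y))$ with $L$ applied in the variable $y$; since $p(x+y) = \sum_{k} \frac{p^{(k)}(x)}{k!} y^k$ is a finite sum with polynomial coefficients in $x$, linearity of $L$ gives $\mathfrak{L}(p)(x) = \sum_k \frac{L(y^k)}{k!} p^{(k)}(x)$, a polynomial in $x$. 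This simultaneously shows $\mathfrak{L}$ is a well-defined linear operator on $\C[x]$ and that it is shift-invariant: one computes $\mathfrak{L}(T_a p)(x) = L((T_a p)(x+y)) = L(p(x+a+y)) = \mathfrak{L}(p)(x+a) = T_a(\mathfrak{L}(p))(x)$ directly from the definition of $T_a$. Hence $\mathfrak{L} \in \Sigma$, and linearity of $L \mapsto \mathfrak{L}$ in $L$ is immediate from linearity of evaluation.

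Next I would check that the two maps in (\ref{Eq. L LL}) are mutually inverse. Starting from $L$, form $\mathfrak{L}$ and then the functional $p \mapsto \mathfrak{L}(p)(0) = L(T_0(p)) = L(p)$, since $T_0$ is the identity; this recovers $L$. Conversely, starting from an arbitrary $\mathfrak{Q} \in \Sigma$, set $L(p) = \mathfrak{Q}(p)(0)$ and form the associated operator $\mathfrak{L}(p)(x) = L(T_x p) = \mathfrak{Q}(T_x p)(0)$. Here shift-invariance of $\mathfrak{Q}$ is exactly what is needed: $\mathfrak{Q}(T_x p)(0) = (T_x \mathfrak{Q}(p))(0) = \mathfrak{Q}(p)(x)$, so $\mathfrak{L} = \mathfrak{Q}$. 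This shows the composite in both directions is the identity, so $\mathfrak{j}$ is a bijection, and being linear it is a linear isomorphism.

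The main subtlety — and the only place one must be slightly careful — is the meaning of $T_x(p)$ when the subscript is the very variable of the polynomial: it must be interpreted as substituting $x \mapsto x + y$ and letting $L$ act on the auxiliary variable, so that $\mathfrak{L}(p)$ is again a genuine element of $\C[x]$ rather than a number. Once that convention is fixed, every step is a one-line manipulation, and the crucial structural input is precisely the shift-invariance that defines $\Sigma$ — it is what makes $L(T_x p) = \mathfrak{Q}(p)(x)$ hold and hence what makes the inverse formula work. I do not expect any genuine obstacle; the proof is a direct verification, and the content of the lemma is really the bookkeeping that pins down the dictionary between $\C[x]^\ast$, $\Sigma$, and (via (\ref{Eq. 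Ring iso})) $\C[[t]]$ that the rest of the section will exploit.
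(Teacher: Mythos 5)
Your proposal is correct and follows essentially the same route as the paper: verify shift-invariance of $\mathfrak{L}(p)(x)=L(T_x(p))$ directly from $T_x\circ T_a=T_{x+a}$, and use shift-invariance of an arbitrary $\mathfrak{Q}\in\Sigma$ to see that $\mathfrak{Q}(T_{x}(p))(0)=\mathfrak{Q}(p)(x)$, so the two assignments are mutually inverse. Your extra remark that $p(x+y)=\sum_k \frac{p^{(k)}(x)}{k!}y^k$ makes $\mathfrak{L}(p)$ visibly a polynomial in $x$ is a small welcome clarification of a point the paper treats as obvious, but it does not change the argument.
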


\begin{proof} Given $L\in\C[x]^\ast$ the map $\mathfrak{L}=\mathfrak{j}(L)$ is clearly linear. It is also shift-invariant since  $$\mathfrak{L}(T_{a}(p))(x)=L(T_{x}(T_a(p)))=L(T_{x+a}(p))=\mathfrak{L}(p)(x+a)=T_a(\mathfrak{L}(p))(x),\quad \text{ for all }a\in\C.$$ Conversely, if $\mathfrak{L}\in\Sigma$ and $L(p)=\mathfrak{L}(p)(0)$, then  $L\in\C[x]^\ast$ and $
	L(T_{x_0}(p))=\mathfrak{L}(T_{x_0}(p))(0)$$=(T_{x_0}\circ \mathfrak{L})(p)(0)=\mathfrak{L}(p)(x_0)$, for all  $x_0\in\C$. Thus the maps in (\ref{Eq. L LL}) are inverses one of each other. 
\end{proof}

\begin{nota} We can codify $L\in\C[x]^\ast$ by the values $L_n:=L(x^n)$ known as the \textit{moments} of $L$. Extending $L$ to $\C[x][[t]]$ by $L(\sum_{n=0}^\infty p_n(x)t^n)=\sum_{n=0}^\infty L(p_n)t^n$ we have the relation $\mathfrak{L}(e^{xB(t)})(x_0,t)=L(T_{x_0}(e^{xB(t)}))=L(e^{(x+x_0)B(t)})=e^{x_0 B(t)}L(e^{xB(t)})$. In particular, the \textit{indicator series} of $L$ \begin{equation}
	L(e^{xt})=\sum_{n=0}^\infty L_n\frac{t^n}{n!},\quad \text{ satisfies }\quad  \mathfrak{L}(e^{xt})(x,t)=L(e^{xt})e^{xt}. 
	\end{equation}
\end{nota}

Now we are in position to give two equivalent ways to characterize Sheffer sequences using functionals.

\begin{teor}\label{Thm2} Given $S\in\C[x]^\ast$, there is a unique $\mathfrak{Q}$-Sheffer sequence $\{s_n\}_{n\geq0}$   satisfying \begin{equation}\label{Eq. Ssn=0}
	S(s_0)=1\quad \text{ and }\quad S(s_n)=0, \text{ for all  } n\geq 1,
	\end{equation} or equivalently, 
	\begin{equation}\label{Eq. STsn=qn}
	S(T_{x_0}(s_n))=q_n(x_0),\quad \text{ for all } n\geq 0 \text{ and } x_0\in\C.\end{equation} Indeed, $\{s_n\}_{n\geq0}$ is the  $(\mathfrak{Q},\mathfrak{S})$--Sheffer sequence, $\mathfrak{S}=\mathfrak{j}(S)$, having exponential generating series $e^{xB(t)}/S(e^{xB(t)})$. Conversely, given a $\mathfrak{Q}$-Sheffer sequence $\{s_n\}_{n\geq0}$, there is a unique $S\in\C[x]^\ast$ such that (\ref{Eq. Ssn=0}) holds.
\end{teor}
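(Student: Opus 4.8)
The plan is to exploit the isomorphism $\mathfrak{j}$ of Lemma \ref{Prop. LL} to translate statements about the functional $S$ into statements about the shift-invariant operator $\mathfrak{S}=\mathfrak{j}(S)$, and then quote the equivalences already established in Section \ref{Sec:2}. First I would observe that condition (\ref{Eq. Ssn=0}) can be rewritten, via the inverse formula $S(p)=\mathfrak{S}(p)(0)$ of (\ref{Eq. L LL}), as $\mathfrak{S}(s_0)(0)=1$ and $\mathfrak{S}(s_n)(0)=0$ for $n\ge 1$. The subtlety is that this only constrains the value of $\mathfrak{S}(s_n)$ at $0$, not the whole polynomial, so I must first argue that $\mathfrak{S}$ is invertible and that (\ref{Eq. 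Ssn=0}) forces $\mathfrak{S}(s_n)=q_n$, i.e. (\ref{Eq. Ssn=qn}).

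To do this, start from an arbitrary $S\in\C[x]^\ast$. I would first reduce to the case where $S(1)\neq 0$: evaluating (\ref{Eq. Ssn=0}) at $n=0$ with $s_0$ a nonzero constant gives $s_0\, S(1)=1$, so necessarily $S(1)\neq 0$ and $s_0=1/S(1)$; hence $\mathfrak{S}=\mathfrak{j}(S)$ satisfies $\mathfrak{S}(1)=S(1)\neq 0$ and is therefore invertible in $\Sigma$. Now set $\mathfrak{S}=A(\mathfrak{Q})$ with $A(0)=\mathfrak{S}(1)\neq 0$, and let $\{s_n\}_{n\ge 0}$ be the $(\mathfrak{Q},\mathfrak{S})$--Sheffer sequence, which by (\ref{Eq. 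Ssn=qn}) satisfies $\mathfrak{S}(s_n)=q_n$. Applying $S(p)=\mathfrak{S}(p)(0)$ and using $q_0=1$, $q_n(0)=0$ for $n\ge 1$ from (\ref{Eq. Basic poly Q}) yields exactly (\ref{Eq. Ssn=0}); and its exponential generating series is $e^{xB(t)}/A(t)$ by (\ref{Eq. Exp sn}), where $A(t)=\iota_{\mathfrak{Q}}^{-1}(\mathfrak{S})=\mathfrak{S}(e^{xB(t)})(0,t)=S(e^{xB(t)})$ by the Remark following Lemma \ref{Prop. LL}, giving the claimed series $e^{xB(t)}/S(e^{xB(t)})$. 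For uniqueness, if $\{\tilde s_n\}$ is any $\mathfrak{Q}$--Sheffer sequence satisfying (\ref{Eq. Ssn=0}), write $\tilde s_n=\sum_{k=0}^n\binom{n}{k}\tilde s_k(0)q_{n-k}$ using the second identity in (\ref{Eq. Binom sn}); apply $S$, use (\ref{Eq. Ssn=0}) and $S(q_m)=q_m(0)=\delta_{m,0}$ (again from evaluating $\mathfrak{j}(S)(q_m)$ at $0$ — wait, this needs $\mathfrak{S}(q_m)(0)$, which is not obviously $\delta_{m,0}$) — so instead I would induct: from (\ref{Eq. Ssn=0}) at $n=0$ we get $\tilde s_0=1/S(1)=s_0$, and assuming $\tilde s_k=s_k$ for $k<n$, the relation $\mathfrak{S}(\tilde s_n)=\sum_{k=0}^n\binom{n}{k}\mathfrak{S}(\tilde s_k)$-type expansion together with $S(\tilde s_n)=0$ pins down $\tilde s_n(0)$ uniquely, hence $\tilde s_n$ uniquely via the binomial identity.

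For the equivalence of (\ref{Eq. Ssn=0}) and (\ref{Eq. STsn=qn}), I would use the operator translation directly: by (\ref{Eq. L LL}), $S(T_{x_0}(s_n))=\mathfrak{S}(s_n)(x_0)$, so (\ref{Eq. STsn=qn}) says precisely $\mathfrak{S}(s_n)=q_n$ as polynomials, i.e. (\ref{Eq. Ssn=qn}); and we already know (\ref{Eq. Ssn=qn}) $\Leftrightarrow$ the defining properties of the $(\mathfrak{Q},\mathfrak{S})$--Sheffer sequence. That (\ref{Eq. STsn=qn}) implies (\ref{Eq. Ssn=0}) is immediate by setting $x_0=0$ and using $q_0=1$, $q_n(0)=0$; the converse direction is exactly the content of the first half of the theorem (given $S$ with $S(1)\neq 0$, the associated Sheffer sequence satisfies $\mathfrak{S}(s_n)=q_n$, hence (\ref{Eq. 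STsn=qn})). Finally, the converse statement of the theorem — given a $\mathfrak{Q}$--Sheffer sequence, produce $S$ — is handled by running the identification backwards: the sequence is $(\mathfrak{Q},\mathfrak{S})$--Sheffer for a unique invertible $\mathfrak{S}$, and $S=\mathfrak{j}^{-1}(\mathfrak{S})$, defined by $S(p)=\mathfrak{S}(p)(0)$, is the unique functional satisfying (\ref{Eq. Ssn=0}), uniqueness following from the same inductive argument as above.

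The main obstacle I anticipate is the uniqueness part: condition (\ref{Eq. Ssn=0}) a priori only controls $S$ on the subspace spanned by $\{s_n\}$, and one must be careful that this determines $S$ on all of $\C[x]$ — which works because $\{s_n\}_{n\ge 0}$, being a Sheffer sequence, is a graded basis of $\C[x]$ (one polynomial in each degree), so specifying $S(s_n)$ for all $n$ specifies $S$ completely. The cleanest route is to package everything through $\mathfrak{j}$ and $\iota_{\mathfrak{Q}}$ from the start, noting $\mathfrak{j}$ is an isomorphism, so that both existence and uniqueness reduce to the already-known bijection between invertible operators $\mathfrak{S}\in\Sigma$ and $(\mathfrak{Q},\mathfrak{S})$--Sheffer sequences, with (\ref{Eq. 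Ssn=0}) being just the coordinate-free shadow of (\ref{Eq. Ssn=qn}) read at $x=0$.
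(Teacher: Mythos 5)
Your proposal is correct and, in its main lines, follows the same route as the paper: you pass through the isomorphism $\mathfrak{j}$ of Lemma \ref{Prop. LL}, use $S(T_{x_0}(s_n))=\mathfrak{S}(s_n)(x_0)$ to identify (\ref{Eq. STsn=qn}) with (\ref{Eq. Ssn=qn}), read off $A(t)=S(e^{xB(t)})$ to get the generating series, and obtain uniqueness of $S$ from the fact that $\mathfrak{S}$ is characterized by $\mathfrak{S}(s_n)=q_n$ together with $\mathfrak{j}$ being bijective. The one place where you diverge is the upgrade from (\ref{Eq. Ssn=0}) to (\ref{Eq. STsn=qn}): the paper does this in one line by applying $S$, in the variable $x$, to the first identity in (\ref{Eq. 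Binom sn}), $s_n(x+x_0)=\sum_{k=0}^n\binom{n}{k}s_k(x)q_{n-k}(x_0)$, so that only the $k=0$ term survives and $S(T_{x_0}(s_n))=q_n(x_0)$ drops out at once, simultaneously giving uniqueness of the sequence. You correctly caught that your first attempt (which needed the false identity $S(q_m)=\delta_{m,0}$) fails, and your inductive replacement is sound in substance, though the displayed ``relation $\mathfrak{S}(\tilde s_n)=\sum_{k=0}^n\binom{n}{k}\mathfrak{S}(\tilde s_k)$-type expansion'' is not an actual identity as written; what you need is simply the second identity in (\ref{Eq. Binom sn}), $\tilde s_n=\sum_{k=0}^n\binom{n}{k}\tilde s_k(0)q_{n-k}$, in which, by the induction hypothesis, only $\tilde s_n(0)$ is undetermined, and applying $S$ with $S(\tilde s_n)=0$ and $S(q_0)=S(1)\neq0$ fixes it (equivalently: $\mathfrak{Q}(\tilde s_n-s_n)=0$ forces $\tilde s_n-s_n$ constant, and $S(1)\neq0$ kills the constant). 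Your explicit observation that (\ref{Eq. Ssn=0}) at $n=0$ forces $S(1)\neq0$ is a small point of care that the paper leaves implicit; otherwise the two arguments buy the same thing, with the paper's use of the translation identity being slightly shorter.
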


\begin{proof} First note that (\ref{Eq. Ssn=0}) is simply (\ref{Eq. STsn=qn}) for $x_0=0$ as (\ref{Eq. Basic poly Q}) shows. Conversely, if (\ref{Eq. Ssn=0}) holds, then (\ref{Eq. STsn=qn}) follows by applying $S$ to the first equation in  (\ref{Eq. Binom sn}). Furthermore, if $\mathfrak{S}=\mathfrak{j}(S)$, then equation (\ref{Eq. Ssn=qn}) is equivalent to (\ref{Eq. STsn=qn}) since $S(T_{x_0}(s_n))=\mathfrak{S}(s_n)(x_0)$, thanks to Lemma \ref{Prop. LL}. Also, applying $S$ to equation (\ref{Eq. Exp sn}) we find $A(t)=S(e^{xB(t)})$. 
	
	Finally, given the  $(\mathfrak{Q},\mathfrak{S})$-Sheffer sequence $\{s_n\}_{n\geq0}$, if $S'\in\C[x]^\ast$ satisfies (\ref{Eq. Ssn=0}), then $\mathfrak{S}'=\mathfrak{j}(S')\in\Sigma$ is invertible and satisfies $\mathfrak{S}'(s_n)=q_n$ for all $n$. Since $\mathfrak{S}$ is characterized by this condition, then $\mathfrak{S}'=\mathfrak{S}$ and $S'=\mathfrak{j}^{-1}(\mathfrak{S})=S$ is also uniquely determined. 
\end{proof}

The previous theorem shows that it is equivalent to have a $\mathfrak{Q}$--Sheffer sequence $\{s_n\}_{n\geq0}$, an invertible operator $\mathfrak{S}\in\Sigma$ or a functional $S\in\C[x]^\ast$ such that $S(1)\neq 0$. Thus we can refer to $\{s_n\}_{n\geq0}$ as the \textit{$(\mathfrak{Q},\mathfrak{S},S)$--Sheffer sequence}, where $\mathfrak{S}=\mathfrak{j}(S)$.

\begin{nota}\label{Nota kfold} The $k$-fold iteration $\mathfrak{S}^k$ of an invertible operator $\mathfrak{S}\in\Sigma$ produces the $(\mathfrak{Q},\mathfrak{S}^k,S^k)$--Sheffer sequence $\{s_n^{(k)}\}_{n\geq0}$,  $S^k:=\mathfrak{j}^{-1}(\mathfrak{S}^k)$, having exponential generating series $e^{xB(t)}/S(e^{xB(t)})^k$. According to  (\ref{Eq. Ssn=qn}) we find  $s_n^{(k)}=\mathfrak{S}^{\circ (-k)}(q_n)=\mathfrak{S}\circ\mathfrak{S}^{\circ (-k-1)}(q_n)=\mathfrak{S}(s_n^{(k+1)})$ holding for all $k,n\in\N$. We also highlight that if $S$ admits the representation $$S(p)=\int_I p(s)w(s)ds, \quad \text{ and }\quad  \mathfrak{S}(p)(x)=\int_I p(x+s)w(s)ds,$$ where $I\subseteq \R$ is an interval and $w:I\to \C$ is such that $S_n$ are all finite, then $$S^{k}(p)=\int_{I^k} p(s_1+\cdots+s_k)w(s_1)\cdots w(s_k)d\boldsymbol{s},\quad  \mathfrak{S}^{ k}(p)(x)=\int_{I^k} p(x+s_1+\cdots+s_k)w(s_1)\cdots w(s_k)d\boldsymbol{s},$$ where $d\boldsymbol{s}=ds_1\cdots ds_k$ and the integration is taken over the $k$th Cartesian product $I^k\subseteq \R^k$.
\end{nota}

A $\mathfrak{Q}$--Sheffer sequence can be calculate though its generating series, or using determinants, see \cite{CosLongo2010,CosLongo2014,Wang2014}. Here we present a new recursion formula using left-inverses for $\mathfrak{Q}$. 

\begin{teor}\label{Coro Last} Let $\mathfrak{Q}$ be a delta operator and $\overline{B}=\iota_\d^{-1}(\mathfrak{Q})$. Then each $x_0\in\C$ defines  $$\mathfrak{Q}^{-1}_{x_0}(p):=\frac{\d}{\overline{B}(\d)}\left(\int_{x_0}^x p(s)ds\right),\text{ which is a left-inverse for } \mathfrak{Q}.$$ Here the integral is taken over the line segment from $x_0$ to $x$. Through it, the $(\mathfrak{Q},\mathfrak{S},S)$--Sheffer sequence $\{s_n\}_{n\geq 0}$ can be calculated recursively by $s_0=1/S(1)$ and \begin{equation}\label{Eq. snQ-1}
	s_n=n\mathfrak{Q}^{-1}_{x_0}(s_{n-1})-\frac{n}{s_0}S\left(\mathfrak{Q}^{-1}_{x_0}(s_{n-1})\right),\quad n\geq1.
	\end{equation}
\end{teor}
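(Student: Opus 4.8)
The plan is to treat the two assertions in turn: first that the operator $\mathfrak{Q}^{-1}_{x_0}$ really inverts $\mathfrak{Q}$, and then, using that, the recursion for $\{s_n\}_{n\geq0}$. \emph{Step 1: $\mathfrak{Q}\circ\mathfrak{Q}^{-1}_{x_0}=\mathrm{id}_{\C[x]}$.} The first thing to pin down is the meaning of $\tfrac{\d}{\overline{B}(\d)}$. Since $\mathfrak{Q}$ is a delta operator, $\overline{B}(t)=\hat b_1 t+\cdots$ with $\hat b_1=\mathfrak{Q}(x)\neq0$, so $\overline{B}(t)=t\,C(t)$ with $C(0)=\hat b_1\neq0$; hence $t/\overline{B}(t)=1/C(t)$ is a unit of $\C[[t]]$, and I would set $\tfrac{\d}{\overline{B}(\d)}:=\iota_\d\!\left(1/C(t)\right)\in\Sigma$, which via (\ref{Eq. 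Ring iso}) is a degree-preserving invertible shift-invariant operator, well defined on any polynomial since only finitely many powers of $\d$ act on it nontrivially. Next, $p\mapsto\int_{x_0}^{x}p(s)\,ds$ is the unique antiderivative of $p$ vanishing at $x_0$ (for polynomials the path is irrelevant), hence a right inverse of $\d$ on $\C[x]$. Because $\iota_\d$ is a ring isomorphism and $\overline{B}(t)\cdot\tfrac{t}{\overline{B}(t)}=t$, one gets $\overline{B}(\d)\circ\tfrac{\d}{\overline{B}(\d)}=\d$, and therefore
\[
\mathfrak{Q}\!\left(\mathfrak{Q}^{-1}_{x_0}(p)\right)=\overline{B}(\d)\left(\frac{\d}{\overline{B}(\d)}\!\left(\int_{x_0}^{x}p(s)\,ds\right)\right)=\d\!\left(\int_{x_0}^{x}p(s)\,ds\right)=p,\qquad p\in\C[x].
\]
Thus $\mathfrak{Q}^{-1}_{x_0}$ is the asserted inverse; it raises degree by one, matching that $\mathfrak{Q}$ lowers it, and it is only a one-sided inverse — which is all the recursion needs.

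\emph{Step 2: the recursion.} I would induct on $n$. For $n=0$, relation (\ref{Eq. Ssn=0}) of Theorem \ref{Thm2} gives $s_0\,S(1)=S(s_0)=1$, i.e.\ $s_0=1/S(1)$. Let $n\geq1$ and suppose $s_{n-1}$ is already produced. By the defining property (\ref{Eq. Qsn}) one has $\mathfrak{Q}(s_n)=n\,s_{n-1}$, and by Step 1 also $\mathfrak{Q}\!\left(n\,\mathfrak{Q}^{-1}_{x_0}(s_{n-1})\right)=n\,s_{n-1}$; subtracting, $\mathfrak{Q}\!\left(s_n-n\,\mathfrak{Q}^{-1}_{x_0}(s_{n-1})\right)=0$. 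Since $\mathfrak{Q}$ is a delta operator its kernel consists exactly of the constant polynomials (because $\deg(\mathfrak{Q}(p))=\deg(p)-1$), so $s_n=n\,\mathfrak{Q}^{-1}_{x_0}(s_{n-1})+c$ for some $c\in\C$. Applying $S$ and using $S(s_n)=0$ for $n\geq1$ from (\ref{Eq. Ssn=0}) gives $n\,S\!\left(\mathfrak{Q}^{-1}_{x_0}(s_{n-1})\right)+c\,S(1)=0$, whence, recalling $s_0\,S(1)=1$, $c=-n\,s_0\,S\!\left(\mathfrak{Q}^{-1}_{x_0}(s_{n-1})\right)$; this is the constant in (\ref{Eq. snQ-1}). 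As the right-hand side is a genuine polynomial whatever $s_{n-1}$ is, the recursion indeed reconstructs $\{s_n\}_{n\geq0}$.

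\emph{Where the work is.} The only delicate point is Step 1: interpreting $\tfrac{\d}{\overline{B}(\d)}$ as $\iota_\d$ of the unit $t/\overline{B}(t)\in\C[[t]]$, checking through the ring isomorphism (\ref{Eq. Ring iso}) that it composes with $\mathfrak{Q}=\overline{B}(\d)$ to plain $\d$, and noting that it maps polynomials to polynomials. Everything afterwards is a two-line induction powered only by $\ker\mathfrak{Q}=\C$, the relation $\mathfrak{Q}(s_n)=n\,s_{n-1}$, and the normalization $S(s_0)=1$, $S(s_n)=0\ (n\geq1)$ from Theorem \ref{Thm2}.
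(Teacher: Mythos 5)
Your Step 1 is, in substance, the paper's own argument: the paper factors $\mathfrak{Q}=\d\circ\mathfrak{P}$ with $\mathfrak{P}\in\Sigma$ invertible and $\mathfrak{P}^{-1}=\d/\overline{B}(\d)$, then composes with the antiderivative vanishing at $x_0$; this is exactly your identification of $t/\overline{B}(t)$ as a unit of $\C[[t]]$ together with $\overline{B}(\d)\circ\frac{\d}{\overline{B}(\d)}=\d$. In Step 2 you run the verification in the opposite direction: the paper defines $s_n'$ by the recursion, checks $\mathfrak{Q}(s_n')=ns_{n-1}'$ and $S(s_n')=0$, and concludes $s_n'=s_n$ from the uniqueness in Theorem \ref{Thm2}, whereas you start from $\{s_n\}$ and solve for $s_n$ using $\ker\mathfrak{Q}=\C$. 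That reversal is harmless and, if anything, cleaner.

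The problem is the last identification in your Step 2. From $0=S(s_n)=n\,S(\mathfrak{Q}^{-1}_{x_0}(s_{n-1}))+c\,S(1)$ you correctly obtain $c=-\frac{n}{S(1)}S\left(\mathfrak{Q}^{-1}_{x_0}(s_{n-1})\right)=-n\,s_0\,S\left(\mathfrak{Q}^{-1}_{x_0}(s_{n-1})\right)$, since $s_0=1/S(1)$. But the constant written in (\ref{Eq. snQ-1}) is $-\frac{n}{s_0}S\left(\mathfrak{Q}^{-1}_{x_0}(s_{n-1})\right)=-n\,S(1)\,S\left(\mathfrak{Q}^{-1}_{x_0}(s_{n-1})\right)$, so the sentence ``this is the constant in (\ref{Eq. snQ-1})'' is justified only when $S(1)^2=1$: applying $S$ to the right-hand side of (\ref{Eq. snQ-1}) gives $n\,S\left(\mathfrak{Q}^{-1}_{x_0}(s_{n-1})\right)\left(1-S(1)^2\right)$, which need not vanish. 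A concrete check in the Appell case $\mathfrak{Q}=\d$: take $L(p)=2p(0)$ and $x_0=1$; then $p_0=1/2$ and $p_1(x)=x/2$, while the formula with $n/s_0$ produces $\frac{x-1}{2}+2$, whereas your constant $-n\,s_0\,S(\cdots)$ gives the correct $x/2$. So what your argument actually proves is the recursion with $n\,s_0$ in place of $n/s_0$ (equivalently, (\ref{Eq. snQ-1}) as printed under the extra normalization $S(1)=1$, which holds in every example in the paper and makes $s_0=1$). Note that the paper's own verification of $S(s_n')=0$ relies on the same cancellation $S(1)/s_0=1$, so the discrepancy is inherited from the statement; you should record the corrected constant (or impose $S(1)=1$ explicitly) rather than assert that your $c$ coincides with the printed one.
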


\begin{proof} By writing $\mathfrak{Q}=\d\circ \mathfrak{P}$, where $\mathfrak{P}\in\Sigma$ is invertible and $\mathfrak{P}^{-1}=\d/\overline{B}(\d)$, we find $$\textstyle \mathfrak{Q}\circ \mathfrak{Q}^{-1}_{x_0}(p)(x)=\left(\d\circ \mathfrak{P}\right)  \left(\mathfrak{P}^{-1}\left(\int_{x_0}^x p(s)ds\right)\right)=\d\left(\int_{x_0}^x p(s)ds\right)(p)=p(x),$$ as required. Now, setting $s_0'=s_0=1/S(1)$ and $s_n'(x)$ for the left-side of (\ref{Eq. snQ-1}) we have \begin{align*}
	\mathfrak{Q}(s_n')&=n\mathfrak{Q}(\mathfrak{Q}^{-1}_{x_0}(s'_{n-1}))=ns_{n-1}', \text{ and }\\ S(s_n')&=nS(\mathfrak{Q}^{-1}_{x_0}(s'_{n-1}))-\frac{n}{s_0}S\left(\mathfrak{Q}^{-1}_{x_0}(s'_{n-1})\right)S(1)=0,
	\end{align*}	 
	for all $n\geq 1$. Thus $\{s_n'\}_{n\geq0}$ is the $(\mathfrak{Q},\mathfrak{j}(S),S)$--sequence, so  $s_n=s_n'$, for all $n$.
\end{proof}

\section{The case of Appell sequences}\label{Sec:4}

The main example of Sheffer sequences are the \textit{Appell sequences} --in honor of P. E. Appell (1880) \cite{Appell}-- corresponding to \begin{equation}\label{Eq. Q=d}
\mathfrak{Q}=\d,\quad \text{ for which } B(t)=\overline{B}(t)=t,\quad  \text{ and } q_n(x)=x^n.
\end{equation} 
In this case we see $\{p_n(x)\}_{n\geq 0}$ is an Appell sequence if $p_0\neq 0$ is a constant and
\begin{equation}\label{Eq. Appell 1}
\frac{d p_n}{dx}(x)=np_{n-1}(x),\qquad n\geq 1,\end{equation}  Equivalently, there is a unique invertible $\mathfrak{L}\in\Sigma$ satisfying $\mathfrak{L}(p_n)=x^n$, for all $n\geq0$, the sequence has a exponential series of the form ${e^{xt}}/{L(e^{xt})}$, where $L=\mathfrak{j}^{-1}(\mathfrak{L})\in\C[x]^\ast$, or they satisfy the corresponding equations to (\ref{Eq. Binom sn}).  We can refer to $\{p_n\}_{n\geq0}$ as the \textit{$(\mathfrak{L},L)$--Appell} sequence. In this setting, theorems \ref{Thm2} and \ref{Coro Last} take the following form.

\begin{teor}\label{Thm New} The $(\mathfrak{L},L)$--Appell sequence $\{p_n\}_{n\geq0}$ is the unique Appell sequence satisfying \begin{equation}\label{Eq. LTpn}
	L(T_{x_0}(p_n))=x_0^n,\quad \text{ for all } n\geq0 \text{ and } x_0\in\C.\end{equation} Furthermore, it can be calculated recursively by $p_0=1/L(1)$ and  \begin{equation}\label{Eq. pn int}
	p_n(x)=n\int_{x_0}^x p_{n-1}(s)ds-\frac{n}{p_0}L\left(\int_{x_0}^x p_{n-1}(s)ds\right).
	\end{equation} 
\end{teor}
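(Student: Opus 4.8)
The plan is to obtain Theorem \ref{Thm New} directly as the specialization of Theorems \ref{Thm2} and \ref{Coro Last} to the delta operator $\mathfrak{Q}=\d$, using the dictionary $B(t)=\overline{B}(t)=t$ and $q_n(x)=x^n$ recorded in (\ref{Eq. Q=d}); essentially all that is needed is to unwind the notation.

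For the characterization (\ref{Eq. LTpn}) I would take $S=L$, so that $\mathfrak{S}=\mathfrak{L}=\mathfrak{j}(L)$, in Theorem \ref{Thm2}. Since $q_n(x_0)=x_0^n$, condition (\ref{Eq. STsn=qn}) reads exactly $L(T_{x_0}(p_n))=x_0^n$, and since $e^{xB(t)}=e^{xt}$ the generating series $e^{xB(t)}/S(e^{xB(t)})$ becomes $e^{xt}/L(e^{xt})$, matching the description of the $(\mathfrak{L},L)$--Appell sequence recalled just before the statement. The existence and uniqueness of an Appell sequence satisfying (\ref{Eq. LTpn}) are then precisely the existence and uniqueness assertions of Theorem \ref{Thm2}.

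For the recursion (\ref{Eq. pn int}) I would invoke Theorem \ref{Coro Last}, the only point being the explicit form of the left-inverse $\mathfrak{Q}^{-1}_{x_0}$ when $\mathfrak{Q}=\d$. Here $\overline{B}=\iota_\d^{-1}(\d)=t$; equivalently, in the factorization $\mathfrak{Q}=\d\circ\mathfrak{P}$ used in the proof of Theorem \ref{Coro Last} one has $\mathfrak{P}=\mathfrak{P}^{-1}=\d/\overline{B}(\d)=\mathrm{id}$. Therefore
\[
\mathfrak{Q}^{-1}_{x_0}(p)(x)=\frac{\d}{\overline{B}(\d)}\left(\int_{x_0}^x p(s)\,ds\right)=\int_{x_0}^x p(s)\,ds,
\]
and substituting this, together with $s_0=1/S(1)=1/L(1)=p_0$, into the recursion (\ref{Eq. snQ-1}) reproduces (\ref{Eq. pn int}) verbatim.

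Because the statement is a pure specialization, there is no genuine obstacle; the one thing worth a word of care is that $\int_{x_0}^x p_{n-1}(s)\,ds$ must be read as the polynomial in $x$ given by the antiderivative of $p_{n-1}$ vanishing at $x_0$, so that applying the functional $L$ to it in the second term of (\ref{Eq. pn int}) is meaningful. As a self-contained alternative to citing the general machinery, one can verify (\ref{Eq. pn int}) directly, exactly as in the proof of Theorem \ref{Coro Last}: differentiating shows $p_n'=np_{n-1}$, so $\{p_n\}$ is an Appell sequence, while applying $L$ gives $L(p_n)=0$ for $n\ge1$ and $L(p_0)=1$; the uniqueness from the first part then identifies $\{p_n\}$ as the $(\mathfrak{L},L)$--Appell sequence.
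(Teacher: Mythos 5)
Your proposal is correct and follows exactly the paper's route: the paper states Theorem \ref{Thm New} as the immediate specialization of Theorems \ref{Thm2} and \ref{Coro Last} to $\mathfrak{Q}=\d$, where $\overline{B}(t)=t$, $q_n(x)=x^n$, and $\mathfrak{Q}^{-1}_{x_0}(p)=\int_{x_0}^x p(s)\,ds$, which is precisely your unwinding of the notation. Your closing self-contained verification matches the paper's remark that the result ``can be obtained by direct means,'' so nothing is missing.
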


Although we have deduced the previous theorem from the more general case of Sheffer sequences, it can obtained by directly means. We also remark that (\ref{Eq. LTpn}) is referred as the \textit{mean value property} for Appell sequences connected to random variables, see \cite[Proposition 2.7]{Ta2015} and the references therein. 

Additionally, we can also express an Appell sequence in terms of a delta operator as follows. 

\begin{prop}\label{Prop. 2} Let $\{p_n\}_{n\geq0}$ be the $(\mathfrak{L},L)$--Appell sequence with generating series $e^{xt}/L(e^{xt})=C(t)e^{xt}$. If $\mathfrak{Q}$ is a delta operator with $B(t)\in\C[[t]]$ as in equation (\ref{Eq. B(t)}), and $(C\circ B)(t)=\sum_{k=0}^\infty \frac{\a_k}{k!} t^k$,  then $$\mathfrak{L}^{-1}=\sum_{k=0}^\infty \frac{\a_k}{k!}\mathfrak{Q}^k,\quad \text{ and thus}\quad  p_n(x)=\mathfrak{L}^{-1}(x^n)=\sum_{k=0}^n \frac{\a_k}{k!} \mathfrak{Q}^k(x^n).$$
\end{prop}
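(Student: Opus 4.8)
The plan is to work entirely inside the operator picture provided by the ring isomorphism $\iota_\d$ and the identification $\mathfrak{j}$, so that the statement becomes a routine manipulation of formal power series composed with delta operators. First I would recall that, for an Appell sequence with generating series $e^{xt}/L(e^{xt}) = C(t)e^{xt}$, we have $C(t) = 1/L(e^{xt}) = \iota_\d^{-1}(\mathfrak{L}^{-1})$; equivalently $\mathfrak{L}^{-1} = C(\d)$, since $\mathfrak{L}(p_n) = x^n$ together with $\iota_\d$ being a ring isomorphism forces $\mathfrak{L} = L(e^{xt})$ evaluated at $\d$, hence $\mathfrak{L}^{-1} = C(\d)$. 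This is the first key step: to pin down $\mathfrak{L}^{-1}$ as the shift-invariant operator $C(\d)$.

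Next I would use the fact, recorded in Section \ref{Sec:2}, that a delta operator $\mathfrak{Q}$ also furnishes a ring isomorphism $\iota_{\mathfrak{Q}} \colon \C[[t]] \to \Sigma$, $A(t) \mapsto A(\mathfrak{Q})$, and that $\mathfrak{Q} = B^{\langle -1\rangle}(\d)$ in the sense that $\iota_\d^{-1}(\mathfrak{Q}) = \overline{B}(t)$ with $\overline{B} = B^{\langle -1 \rangle}$ the compositional inverse of $B$. The point is that composing the two isomorphisms gives, for any $F \in \C[[t]]$, the identity $F(\d) = (F \circ B)(\mathfrak{Q})$ as operators in $\Sigma$: indeed both sides are the image of the same power series under a ring isomorphism once one checks they agree on the generator, namely $\d = B(\mathfrak{Q})$, which is exactly the relation $\overline{B} = B^{\langle -1\rangle}$. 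Applying this with $F = C$ yields $\mathfrak{L}^{-1} = C(\d) = (C \circ B)(\mathfrak{Q}) = \sum_{k=0}^\infty \frac{\alpha_k}{k!}\mathfrak{Q}^k$, which is the first displayed formula.

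For the second displayed formula I would simply apply the operator identity to the monomial $x^n$: since $\mathfrak{L}(p_n) = x^n$ gives $p_n = \mathfrak{L}^{-1}(x^n)$, and since $\mathfrak{Q}^k$ lowers degree by exactly one each time (so $\mathfrak{Q}^k(x^n) = 0$ for $k > n$), the infinite series truncates and $p_n(x) = \sum_{k=0}^n \frac{\alpha_k}{k!}\mathfrak{Q}^k(x^n)$. The truncation is the only place where one uses that we are evaluating on polynomials rather than on formal series, and it is immediate from $\deg(\mathfrak{Q}(p)) = \deg(p) - 1$.

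The main obstacle, such as it is, is purely bookkeeping: making precise the claim that $C(\d) = (C\circ B)(\mathfrak{Q})$ requires justifying that substitution of one delta operator into a formal power series is compatible with composition of the associated power series — i.e. that $\iota_{\mathfrak{Q}} = \iota_\d \circ (\,\cdot \circ B\,)$ as maps $\C[[t]] \to \Sigma$. This follows because both $\iota_\d$ and $\iota_{\mathfrak{Q}}$ are continuous (for the $t$-adic topology) ring homomorphisms sending $t \mapsto \d$ and $t \mapsto \mathfrak{Q}$ respectively, $\mathfrak{Q} = \overline{B}(\d)$ with $\overline{B}(0) = 0$ so the substitution is well defined, and $\overline{B} = B^{\langle -1\rangle}$; hence $\iota_{\mathfrak{Q}}(A) = A(\mathfrak{Q}) = A(\overline{B}(\d)) = (A \circ \overline{B})(\d)$, and conversely $\iota_\d(F) = F(\d) = F(B(\mathfrak{Q})) = (F \circ B)(\mathfrak{Q}) = \iota_{\mathfrak{Q}}(F \circ B)$. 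Taking $F = C$ closes the argument. I would present this compatibility in one or two sentences and then read off both formulas.
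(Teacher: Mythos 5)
Your argument is correct, and it reaches the conclusion by a route that differs in packaging from the paper's. The paper never isolates $\mathfrak{L}^{-1}=C(\d)$ as a separate step; it applies the candidate operator $\mathfrak{Q}_1=\sum_{k\geq0}\frac{\a_k}{k!}\mathfrak{Q}^k$ directly to the exponential, using $\mathfrak{Q}^k(e^{xt})=\overline{B}(t)^k e^{xt}$ to get $\mathfrak{Q}_1(e^{xt})=(C\circ B)(\overline{B}(t))e^{xt}=C(t)e^{xt}$, and then reads off $\mathfrak{Q}_1(x^n)=p_n(x)=\mathfrak{L}^{-1}(x^n)$ by comparing coefficients with the generating series, concluding $\mathfrak{Q}_1=\mathfrak{L}^{-1}$. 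You instead work purely at the level of the two ring isomorphisms: you first pin down $\mathfrak{L}^{-1}=C(\d)$ (legitimate, since $\iota_\d^{-1}(\mathfrak{L})=\mathfrak{L}(e^{xt})(0,t)=L(e^{xt})$ and invertibility transfers through the ring isomorphism $\iota_\d$), and then invoke the compatibility of operator substitution with composition of series, $\iota_\d(F)=\iota_{\mathfrak{Q}}(F\circ B)$, a transfer principle the paper uses only implicitly through its exponential computation. Both proofs ultimately rest on the same identity $(C\circ B)\circ\overline{B}=C$; yours makes the relation between $\iota_\d$ and $\iota_{\mathfrak{Q}}$ explicit and reusable, while the paper's generating-function computation is shorter and self-contained. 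One small slip to fix: your shorthand $\iota_{\mathfrak{Q}}=\iota_\d\circ(\,\cdot\,\circ B)$ is stated backwards --- since $\mathfrak{Q}=\overline{B}(\d)$ it should read $\iota_{\mathfrak{Q}}=\iota_\d\circ(\,\cdot\,\circ\overline{B})$, equivalently $\iota_\d=\iota_{\mathfrak{Q}}\circ(\,\cdot\,\circ B)$ --- but the chain you display immediately afterwards, $\iota_\d(F)=F(B(\mathfrak{Q}))=(F\circ B)(\mathfrak{Q})=\iota_{\mathfrak{Q}}(F\circ B)$, is the correct one and is what your argument actually uses, so this is only a typo. The final truncation for $k>n$ matches the paper exactly.
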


\begin{proof} The operator $\mathfrak{Q}_1=\sum_{k=0}^\infty \frac{\a_k}{k!} \mathfrak{Q}^k\in \Sigma$ is invertible since $\a_0=C(0)\neq0$. Recalling that $\mathfrak{Q}(e^{xt})=\overline{B}(t)e^{xt}$, we find $$\sum_{n=0}^\infty \mathfrak{Q}_1(x^n) \frac{t^n}{n!}=\mathfrak{Q}_1(e^{xt})=(C\circ B)(\overline{B}(t))e^{xt}=C(t)e^{xt}=\sum_{n=0}^\infty \frac{p_n(x)}{n!}t^n.$$ Therefore,	 $\mathfrak{L}^{-1}(x^n)=p_n(x)=\mathfrak{Q}_1(x^n)$, for all $n$, and $\mathfrak{Q}_1=\mathfrak{L}^{-1}$ as required. Finally,  $\mathfrak{Q}_1(x^n)=\sum_{k=0}^n \frac{\a_k}{k!} \mathfrak{Q}^k(x^n),$ since $\mathfrak{Q}^k(x^n)=0$ if $k>n$ as $\mathfrak{Q}^k$ lowers the degree of a polynomial by $k$.
\end{proof}

\begin{nota}
	The previous proposition was recently studied in \cite{AdellLekuona} for $\mathfrak{Q}=\Delta_1=\Delta$, the difference operator of step one. Let us recall that for each $h\in\C^\ast$, the \textit{difference operator} $$\Delta_h:=T_h-1,\quad  \text{ i.e., }\quad  \Delta_h(p)(x)=p(x+h)-p(x),$$ constitute a delta operator  for which $\overline{B}(t)=\Delta_h(e^{xt})(0,t)=e^{xt}(e^{ht}-1)|_{x=0}=e^{ht}-1$ and $B(t)=\log(1+t)/h$. Thus $e^{xB(t)}=(1+t)^{x/h}=\sum_{n=0}^\infty (x/h)_n t^n/n!$ and $\Delta_h$ has $q_n(x)=(x/h)_n$ as basic sequence. Here $(a)_n:=a(a-1)\cdots(a-n+1)$ is the falling factorial.
\end{nota}

\begin{nota}\label{Rmk Stirling} Any pair of series $B(t), \overline{B}(t)\in t\C[[t]]$, $B'(0)\neq 0$, $\overline{B}'(0)\neq 0$, compositional inverses one of each other, define two families of numbers $\{s_B(n,k)\}_{n\geq k}$ and $\{S_B(n,k)\}_{n\geq k}$ determined by \begin{equation}\label{Eq. B Stirling}
	\frac{B(t)^k}{k!}=\sum_{n=k}^\infty s_B(n,k)\frac{t^n}{n!},\quad \frac{\overline{B}(t)^k}{k!}=\sum_{n=k}^\infty S_B(n,k)\frac{t^n}{n!},
	\end{equation} just as $B(t)=\log(1+t)$, $\overline{B}(t)=e^t-1$ define the \textit{Stirling numbers} of first and second kind \cite[p. 50]{Comtet}, \begin{equation}\label{Eq. Stirling numbers}
	\frac{\log(1+t)^k}{k!}=\sum_{n=k}^\infty s(n,k)\frac{t^n}{n!},\quad \frac{(e^t-1)^k}{k!}=\sum_{n=k}^\infty S(n,k)\frac{t^n}{n!}.
	\end{equation} If $B,\overline{B}$ are associated to the delta operator $\mathfrak{Q}$ as in (\ref{Eq. B(t)}), then (\ref{Eq. B Stirling}) induces the inverse relations \begin{equation*}
	q_n(x)=\sum_{k=0}^n s_B(n,k) x^k, \quad  x^n=\sum_{k=0}^n S_B(n,k) q_k(x),
	\end{equation*} and \begin{equation*}
	\a_k=\sum_{k=0}^n s_B(n,k) p_k(0),\quad p_k(0)=\sum_{k=0}^n S_B(n,k) \a_k,\end{equation*} where $\{\a_k\}_{k\geq0}$ and $\{p_k(0)\}_{k\geq0}$ are as in Proposition \ref{Prop. 2}.
	These follows from expanding  $e^{xB(t)}$, 
	$e^{xt}=e^{xB(\overline{B}(t))}$, $C(t)$, and $C(t)=(C\circ B)(\overline{B}(t))$, respectively, as in the usual case of Stirling numbers \cite[p. 144]{Comtet}.
\end{nota}

\section{Examples}\label{Sec:5}

This final section is aimed to apply the previous results to concrete examples focusing on the role of the functional involved. In particular, we find integral representations for Bernoulli and Euler polynomials, and also for Hermite $d$-orthogonal polynomials \cite{Douak96}. Finally, we collect in Table \ref{Table} a list of important $(\mathfrak{L},L)$--Appell sequences and their characterization via Theorem \ref{Thm New}, including the recent results of Kummer hypergeometric polynomials given in \cite{Drissi}. 

\begin{nota} Any functional $S\in\C[x]^\ast$  admits a representation of the form $$S(p)=\int_{0}^{+\infty} p(s)d\beta(s),\qquad  \mathfrak{S}(p)(x)=\int_{0}^{+\infty} p(x+s)d\beta(s),$$ for some function $\beta:(0,+\infty)\to\C$ of bounded variation as it was proved by Boas \cite{Boas} in relation to the Stieljes moment problem. Thus the characterization of Theorem \ref{Thm2} can be written as $$\int_{0}^{+\infty} s_n(x_0+s)d\beta(s)=q_n(x_0),$$ and equation (\ref{Eq. SQm}) takes the form $\int_{0}^{+\infty} \mathfrak{Q}^{(m)}(s_n)(s)d\beta(s)=n!\,\delta_{n,m}$. This reasoning contains the early characterization of  Appell sequences of Thorne \cite{Thorne}, soon after generalized by Sheffer \cite{Sheffer1945}.
\end{nota}

\begin{eje}\label{Example Traslations} Let $\{p_n(x)\}_{n\geq 0}$ be the $(\mathfrak{L},L)$--Appell sequence, $C(t)=1/L(e^{xt})$, and $\a,\beta\in\C$ with $\beta\neq0$. Then $\{p_n(x-\a)\}_{n\geq 0}$ is the $(\mathfrak{L}\circ T_\a,L\circ T_\a)$--Appell sequence and $\{\beta^{-n} p_n(\beta x)\}_{n\geq 0}$ is the $(\mathfrak{L}\circ \mathcal{H}_\a,L\circ \mathcal{H}_\a)$--Appell sequences, where $\mathcal{H}_\beta:\C[x]\to \C[x]$ is the homothecy  $\mathcal{H}_\beta(p)(x)=p(x/\beta)$. Indeed, these sequences have as generating series  $C(t)e^{(x-\a)t}=e^{xt}/L(e^{(x+\a)t})$, and  $C(t/\beta)e^{xt}=e^{xt}/L(e^{xt/\beta})$, respectively.
\end{eje}

\begin{eje}[Bernoulli polynomials] They are defined by the expansion $$\sum_{n=0}^{\infty} B_n(x)\frac{t^n}{n!}=\frac{t}{e^t-1}e^{xt},\qquad B_n(x)=\sum_{j=0}^n \binom{n}{j} B_j(0)x^{n-j}.$$ The $B_j=B_j(0)$ are the  Bernoulli numbers that satisfy $B_0=1$, $B_1=-\frac{1}{2}$ and $B_{2j+1}=0$, $j\geq 1$. We see the Bernoulli polynomials conform the Appell sequence relative to $$I(p)=\int_0^1 p(s)ds\quad \text{ and }\quad  \mathfrak{I}(p)(x):=\int_0^{1} p(x+s)ds,\quad \text{ since } I(e^{xt})=\frac{e^t-1}{t}.$$ Theorem \ref{Thm New} asserts they are characterized by the condition $\mathfrak{I}(B_n)(x)=\int_0^1 B_n(x+s)ds=x^n$, or equivalently after differentiation, by the equation $\Delta(B_{n})(x)=B_{n}(x+1)-B_{n}(x)=nx^{n-1},$ which is a well-known result. Furthermore, we can compute them recursively by $B_0(x)=1$ and $$B_n(x)=n\int_0^x B_{n-1}(t)dt-n\int_0^1\int_0^u B_{n-1}(t)dtdu.$$ 
	
	Following Remark \ref{Nota kfold} we find the $k$-fold iteration of ${I}$ is $I^{ k}(p)=\int_{[0,1]^k} p(s_1+\cdots+s_k)d\boldsymbol{s}$ which produces the \textit{$k$th order Bernoulli polynomials} $B_n^{(k)}(x)$ having  ${t^ke^{xt}}/{(e^t-1)^k}$ as generating exponential series. Moreover, we can write these polynomials in terms of $\Delta$ as $$B_n(x)=\sum_{j=0}^n \frac{(-1)^j}{j+1} \Delta^j(x^n),\quad \text{ and }\quad  B_n^{(k)}(x)=\sum_{j=0}^n \frac{k!}{(k+j)!} s(k+j,k)\Delta^j(x^n),$$ by using Proposition \ref{Prop. 2}. In fact, in this case $B(t)=\log(1+t)$ and the previous formulas follow from (\ref{Eq. Stirling numbers}) since  $((e^t-1)/t)^k\circ B(t)=\log(1+t)^k/t^k$, c.f., \cite[Theorem 5]{AdellLekuona}.
\end{eje}

An interesting question is to determine an analytic representation for the inverse operator of $\mathfrak{I}$ which in turn gives left-inverses for $\Delta$ and an analytic representation of Bernoulli polynomials. We remark that the formula below to invert $\Delta$ is familiar in the theory of difference equations and it has been used to justify Ramanujan summation, see \cite[Theorem 1]{Candel}.

\begin{prop}\label{Prop. Inv Ber} The map  $L(p)=p(0)-\frac{p'(0)}{2}-i\int_0^{+\infty} \frac{p'(is)-p'(-is)}{e^{2\pi s}-1} ds$ verifies $L(e^{xt})=t/(e^{t}-1)$. Therefore, the inverse operator of   $\,\mathfrak{I}(p)(x)=\int_0^1 p(x+s)ds$ is $$\mathfrak{I} ^{-1}(p)(x)=p(x)-\frac{p'(x)}{2}-i\int_0^{+\infty} \frac{p'(x+is)-p'(x-is)}{e^{2\pi s}-1} ds.$$ Moreover, the difference operator $\Delta$  admits the left-inverses $$\Delta^{-1}_{x_0}(p)=\int_{x_0}^x p(s)ds-\frac{p(x)}{2}-i\int_0^{+\infty} \frac{p(x+is)-p(x-is)}{e^{2\pi s}-1}ds.$$ Furthermore, the Bernoulli polynomials admit the integral representation 
	$$B_n(x)=x^n-\frac{n}{2}x^{n-1}-in\int_0^{+\infty} \frac{(x+is)^{n-1}-(x-is)^{n-1}}{e^{2\pi s}-1} ds.$$
\end{prop}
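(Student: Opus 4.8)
The plan is to establish the single key identity $L(e^{xt}) = t/(e^t-1)$ for the stated functional $L$, after which every other claim in the proposition follows formally from the machinery already set up. Indeed, once we know $L(e^{xt})=t/(e^t-1)$, Theorem \ref{Thm New} (via the characterization of Appell sequences through their generating series $e^{xt}/L(e^{xt})$) identifies $\{B_n\}_{n\geq 0}$ as the $(\mathfrak{L},L)$--Appell sequence with $\mathfrak{L}=\mathfrak{j}(L)$; since the Bernoulli polynomials are the Appell sequence relative to $\mathfrak{I}$ with $I(e^{xt})=(e^t-1)/t$, uniqueness forces $\mathfrak{L}=\mathfrak{I}^{-1}$, giving the formula for $\mathfrak{I}^{-1}(p)(x)$ by applying the recipe $\mathfrak{L}(p)(x)=L(T_x(p))$ of Lemma \ref{Prop. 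LL}. The left-inverse formula for $\Delta$ then comes from writing $\Delta=\d\circ\mathfrak{I}$ (equivalently $\mathfrak{I}=\d/\overline{B}(\d)$ with $\overline{B}(t)=e^t-1$), so that $\Delta^{-1}_{x_0}(p)=\mathfrak{I}^{-1}\bigl(\int_{x_0}^x p(s)\,ds\bigr)$, exactly as in Theorem \ref{Coro Last}; substituting $P(x)=\int_{x_0}^x p(s)\,ds$, whose derivative is $p$, into the expression for $\mathfrak{I}^{-1}$ produces the displayed formula. Finally $B_n(x)=\mathfrak{I}^{-1}(x^n)$ gives the integral representation upon computing $\frac{d}{dx}x^n = nx^{n-1}$.

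So the entire proof reduces to verifying $L(e^{xt})=t/(e^t-1)$, i.e. that the moments $L_n=L(x^n)$ are the coefficients in $t/(e^t-1)=\sum_n B_n t^n/n!$, equivalently $L(x^n)=B_n$ for all $n\geq 0$. The plan for this is to interpret the formula for $L$ termwise on $p(x)=x^n$: we get $L(x^n)=0^n - \tfrac{n}{2}0^{n-1} - in\int_0^{\infty}\frac{(is)^{n-1}-(-is)^{n-1}}{e^{2\pi s}-1}\,ds$. The first two terms handle $n=0$ (giving $B_0=1$) and $n=1$ (giving $-\tfrac12=B_1$) and vanish for $n\geq 2$; for the integral term one uses $(is)^{n-1}-(-is)^{n-1} = i^{n-1}s^{n-1}(1-(-1)^{n-1})$, which is $0$ when $n$ is odd (matching $B_{2j+1}=0$ for $j\geq1$) and equals $2i^{n-1}s^{n-1}$ when $n$ is even. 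Thus for even $n=2j\geq 2$ one must show $-2i\cdot i^{n-1}\cdot n\int_0^\infty \frac{s^{n-1}}{e^{2\pi s}-1}\,ds = B_n$. Here $-2i\cdot i^{n-1}=-2i^n=-2(-1)^{j}$, and the integral is evaluated via the classical formula $\int_0^\infty \frac{s^{m-1}}{e^{2\pi s}-1}\,ds = \frac{\Gamma(m)\zeta(m)}{(2\pi)^m}$ (a standard Mellin-transform identity, valid for $m>1$). With $m=n$ this yields $-2(-1)^j n\cdot\frac{(n-1)!\,\zeta(n)}{(2\pi)^n} = -2(-1)^j\frac{n!\,\zeta(n)}{(2\pi)^n}$, and invoking Euler's formula $\zeta(2j)=(-1)^{j+1}\frac{(2\pi)^{2j}B_{2j}}{2(2j)!}$ collapses this to exactly $B_{2j}=B_n$, as needed.

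An alternative, perhaps cleaner, route to the same identity avoids $\zeta$-values: recognize the right-hand side $\frac{t}{e^t-1} = 1 - \frac t2 + \sum_{j\geq 1}\frac{B_{2j}}{(2j)!}t^{2j}$ and use the known Binet-type / Abel--Plana integral representation $\frac{t}{e^t-1} = 1-\frac t2 + 2\int_0^\infty \frac{s\,dt'}{\cdots}$; more directly, one can quote the Hermite (or Jensen) formula expressing $\sum B_{2j}t^{2j}/(2j)!$ as $-2t\int_0^\infty \frac{\sin(ts)}{e^{2\pi s}-1}\,ds$ after suitable normalization, and then check that $-in\int_0^\infty \frac{(x+is)^{n-1}-(x-is)^{n-1}}{e^{2\pi s}-1}\,ds$ is the $n$th Taylor coefficient (in the $x$-variable, times $n!$... ) of $\frac{t\,e^{xt}}{e^t-1} - e^{xt} + \frac t2 e^{xt}$; this is essentially the Abel--Plana summation formula applied to $f(z)=e^{zt}$ on $[x,x+1]$. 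I expect the main obstacle to be purely a matter of bookkeeping: justifying the termwise passage of the (improper, $x$-dependent) integral under the linear functional and the formal power series manipulations, and pinning down the precise classical integral identity to cite so that the constants $i^n$, factors of $2$, and sign conventions line up. The conceptual content is entirely contained in one well-known integral formula for $t/(e^t-1)$ (Abel--Plana / Hermite); everything downstream is the formal Umbral-Calculus bookkeeping already licensed by Theorems \ref{Thm2}, \ref{Coro Last}, and \ref{Thm New}.
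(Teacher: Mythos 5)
Your proposal is correct and follows essentially the same route as the paper: verify the moments $L(x^n)=B_n$ (with the same parity analysis for odd powers, and the even powers reduced to the classical integral $\int_0^{+\infty}\frac{s^{2j-1}}{e^{2\pi s}-1}\,ds$, which the paper simply cites from the Abel--Plana/DLMF literature while you re-derive it via $\Gamma(n)\zeta(n)/(2\pi)^n$ and Euler's formula), then identify $\mathfrak{j}(L)=\mathfrak{I}^{-1}$ through the indicator series, obtain $\Delta^{-1}_{x_0}$ from Theorem \ref{Coro Last}, and get the Bernoulli representation from $B_n(x)=\mathfrak{I}^{-1}(x^n)$. No substantive differences to report.
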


\begin{proof} For the first statement note $L(1)=1=B_0$, $L(x)=-1/2=B_1$ and $L(x^{2j+1})=0$, $j\geq1$ since the derivative of $x^{2j+1}$ is an even function. For the even powers we find $$L(x^{2j})=2j(-1)^{j+1} \int_0^{+\infty} \frac{2s^{2j-1}}{e^{2\pi s}-1} ds=B_{2j},$$ values that are familiar in the study of Abel--Plana formula \cite[p. 291]{Olver}, \cite[24.7.2]{OlverLozier}. Now, the operator $\mathfrak{L}=\mathfrak{j}(L)$ is the inverse of $\mathfrak{I}$ since $\mathfrak{I}(e^{xt})=(e^t-1)/t$. Finally, Proposition \ref{Coro Last} shows  $\Delta^{-1}_{x_0}(p)=\frac{\d}{e^{h\d}-1}\left(\int_{x_0}^x p(s)ds\right)$ and the previous example proves $B_n(x)=\mathfrak{I}^{-1}(x^n)$ as required.
\end{proof}

\begin{eje}[Euler polynomials] The Apostol--Euler polynomials are determined by $$\frac{e^{xt}}{1+\beta(e^t-1)}=\sum_{n=0}^\infty E_n(\beta;x)\frac{t^n}{n!},\quad \text{ for a fixed } \beta\neq 0.$$ They are the Appell sequence relative to $$L(p)=(1-\beta)p(0)+\beta p(1),\quad  \mathfrak{L}(p)(x)=(1-\beta)p(x)+\beta p(x+1),\quad \text{ since } L(e^{xt})=1+\beta(e^{t}-1).$$ The case $\beta=1/2$ recovers the classical Euler polynomials  $E_n(1/2;x)=E_n(x)$. Theorem \ref{Thm New} shows the Apostol--Euler are 
	characterized by $(1-\beta)E_n(\beta;x)+\beta E_n(\beta;x+1)=x^n$. Moreover,they are given recursively by $E_0(\beta;x)=1$ and $$E_n(\beta;x)=n\int_0^x E_{n-1}(\beta;t)dt-\beta n \int_0^1\int_0^u E_{n-1}(\beta;t)dtdu.$$ 
	
	The \textit{$k$th Apostol--Euler polynomials} $E_n^{(k)}(\beta;x)$ are the Appell sequence relative to the functional ${L}^{k}(p)=\sum_{j=0}^k \binom{k}{j} \beta^j (1-\beta)^{k-j} p(j)$ and with exponential generating series ${e^{xt}}/{\left(1+\beta (e^t-1)\right)^k}$. The case $\beta=2$ corresponds to the \textit{$k$th Euler polynomials} $E_n^{(k)}(x)$.
	Finally, Proposition \ref{Prop. 2} proves that $$E_n^{(k)}(\beta;x)=\sum_{j=0}^n \binom{j+k-1}{j}(-1)^j \beta^j\Delta^j(x^n),\quad \text{ and } E_n(x)=\sum_{j=0}^n \frac{(-1)^j}{2^j} \Delta^j(x^n),$$ since $(1+\beta (e^t-1))^{-k}\circ \log(1+t)=(1+\beta t)^{-k}=\sum_{j=0}^\infty \binom{j+k-1}{j} (-1)^j \beta^j t^j$, c.f., \cite[Theorem 7]{AdellLekuona}.
\end{eje}

In analogy with Proposition \ref{Prop. Inv Ber}, we can write an analytic expression for the inverse of the operator inducing the Euler polynomials. More specifically, we have.

\begin{prop}\label{Prop Inv Euler} The functional  $L(p)=\int_0^{+\infty} \frac{p\left(-\frac{1}{2}+\frac{is}{2}\right)+p\left(-\frac{1}{2}-\frac{is}{2}\right)}{e^{\pi s/2}+e^{-\pi s/2}} ds$ satisfies $L(e^{xt})=2/(e^{t}+1)$. In consequence,  $$\mathfrak{J}^{-1}(p)(x)=\int_0^{+\infty} \frac{p\left(x-\frac{1}{2}+\frac{is}{2}\right)+p\left(x-\frac{1}{2}-\frac{is}{2}\right)}{e^{\pi s/2}+e^{-\pi s/2}} ds$$ is the inverse of $\,\mathfrak{J}(p)(x)=(p(x+1)+p(x))/2$. Furthermore, the Euler polynomials admit the integral representation $$E_n(x)=\int_0^{+\infty} \frac{\left(x-\frac{1}{2}+\frac{is}{2}\right)^n+\left(x-\frac{1}{2}-\frac{is}{2}\right)^n}{e^{\pi s/2}+e^{-\pi s/2}} ds.$$
\end{prop}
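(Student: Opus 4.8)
The plan is to follow the pattern of the proof of Proposition~\ref{Prop. Inv Ber}: first check that $L$ is a well-defined element of $\C[x]^\ast$, then compute its indicator series $L(e^{xt})$, then use the isomorphisms $\iota_\d$ and $\mathfrak{j}$ of Sections~\ref{Sec:2}--\ref{Sec:3} to identify $\mathfrak{j}(L)$ with $\mathfrak{J}^{-1}$, and finally specialize to $p(x)=x^n$. For well-definedness, note that for $p(x)=x^n$ the numerator $\bigl(-\tfrac12+\tfrac{is}{2}\bigr)^n+\bigl(-\tfrac12-\tfrac{is}{2}\bigr)^n=2\,\Re\bigl(-\tfrac12+\tfrac{is}{2}\bigr)^n$ is a real polynomial in $s$ of degree $n$, while the denominator $e^{\pi s/2}+e^{-\pi s/2}$ grows like $e^{\pi s/2}$; hence every moment $L_n=L(x^n)$ is a finite real number and $L$ extends linearly to $\C[x]$.

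The core computation is $L(e^{xt})$. Applying $L$ to the function $x\mapsto e^{xt}$ and factoring out $e^{-t/2}$,
\begin{equation*}
L(e^{xt})=e^{-t/2}\int_0^{+\infty}\frac{e^{ist/2}+e^{-ist/2}}{e^{\pi s/2}+e^{-\pi s/2}}\,ds=e^{-t/2}\int_0^{+\infty}\frac{\cos(st/2)}{\cosh(\pi s/2)}\,ds .
\end{equation*}
Here I would invoke the classical Fourier--cosine integral $\int_0^{+\infty}\frac{\cos(as)}{\cosh(bs)}\,ds=\frac{\pi}{2b}\operatorname{sech}\!\bigl(\frac{\pi a}{2b}\bigr)$ with $a=t/2$ and $b=\pi/2$, which gives $\int_0^{+\infty}\frac{\cos(st/2)}{\cosh(\pi s/2)}\,ds=\operatorname{sech}(t/2)$, so that $L(e^{xt})=e^{-t/2}\operatorname{sech}(t/2)=2/(e^t+1)$ after clearing denominators. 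To make this precise as an identity of formal power series, I would apply the cosine-integral formula for $t$ in a real neighbourhood of $0$, differentiate under the integral sign (legitimate because the $s$-derivatives of the numerator are polynomials times $e^{\pm ist/2}$, dominated by an integrable function), and conclude that the moments $L_n$ are exactly the Taylor coefficients of $2/(e^t+1)$; thus $L(e^{xt})=\sum_{n\ge0}L_n t^n/n!=2/(e^t+1)$.

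With this in hand the conclusion is formal. In the Euler polynomials example above, $\{E_n\}_{n\ge0}$ is the Appell sequence relative to the operator $\mathfrak{J}=\mathfrak{j}(L_0)$ with $L_0(e^{xt})=(e^t+1)/2$. Since $\iota_\d$ is a ring isomorphism and $\iota_\d^{-1}(\mathfrak{j}(M))=M(e^{xt})$ for every $M\in\C[x]^\ast$, composition of operators corresponds to multiplication of indicator series, so
\begin{equation*}
\mathfrak{j}(L)\circ\mathfrak{J}=\iota_\d\bigl(L(e^{xt})\cdot L_0(e^{xt})\bigr)=\iota_\d(1)=\mathrm{id}.
\end{equation*}
Hence $\mathfrak{J}^{-1}=\mathfrak{j}(L)$, so $\mathfrak{J}^{-1}(p)(x)=L(T_x(p))$ by Lemma~\ref{Prop. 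LL}, which is the stated formula; taking $p(x)=x^n$ gives $E_n(x)=\mathfrak{J}^{-1}(x^n)=L(T_x(x^n))=\int_0^{+\infty}\frac{(x-\frac12+\frac{is}{2})^n+(x-\frac12-\frac{is}{2})^n}{e^{\pi s/2}+e^{-\pi s/2}}\,ds$, as claimed.

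I expect the only real obstacle to be the justification of the oscillatory-integral evaluation and of the interchange of integration with the exponential series: the table integral itself is standard, and the differentiation-under-the-integral remark is what turns it into a genuine formal-power-series identity rather than merely a real-analytic one. An alternative, more laborious route would compute each even moment $L_{2n}$ directly through the Dirichlet beta values $\int_0^{+\infty} s^{2n-1}/\cosh(\pi s/2)\,ds$ and match them against the coefficients of $2/(e^t+1)=1-\tanh(t/2)$ (so $L_0=1$, $L_{2n}=0$ for $n\ge1$, and the odd moments come from $-\tanh(t/2)$), but the generating-function computation is considerably shorter.
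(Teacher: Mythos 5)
Your proof is correct, but your key computation takes a genuinely different route from the paper's. The paper reduces to the shifted functional $L'(p)=\int_0^{+\infty}\frac{p(is/2)+p(-is/2)}{e^{\pi s/2}+e^{-\pi s/2}}\,ds$ via $L=L'\circ T_{-1/2}$ and then matches moments term by term: $L'(x^{2j+1})=0$ by oddness, and $L'(x^{2j})=(-1)^jE_{2j}/2^{2j}$ by the tabulated integral representation of the Euler numbers (re-derived by residues in the remark that follows), which identifies $L'(e^{xt})$ with $2/(e^{t/2}+e^{-t/2})$ coefficientwise and hence $L(e^{xt})=e^{-t/2}L'(e^{xt})=2/(e^t+1)$. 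You instead evaluate the indicator series in closed form: after factoring out $e^{-t/2}$ you invoke the Fourier--cosine integral $\int_0^{+\infty}\cos(as)/\cosh(bs)\,ds=\frac{\pi}{2b}\operatorname{sech}\bigl(\frac{\pi a}{2b}\bigr)$, and you then bridge from this analytic identity to the formal power-series statement by differentiating under the integral sign --- a step you rightly flag, since $L(e^{xt})$ means the moment series $\sum_n L_n t^n/n!$, and your domination argument makes it legitimate. Both routes rest on a comparable appeal to a classical integral; the paper's stays termwise and ties the functional directly to the Euler numbers, while yours is a single closed-form evaluation, and you additionally spell out the deduction of $\mathfrak{J}^{-1}=\mathfrak{j}(L)$ and of the representation of $E_n$ through $\iota_\d$ and Lemma \ref{Prop. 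LL}, which the paper compresses into ``the remaining ones follow as in the previous proposition.'' One small caveat: your parenthetical alternative about computing the even moments of $L$ via integrals of $s^{2n-1}/\cosh(\pi s/2)$ is not quite right as stated (the even moments of $L$ itself mix several powers of $s$ because of the shift by $-1/2$; it is the shifted functional $L'$ whose moments are single such integrals, which is exactly the paper's route), but this aside plays no role in your main argument.
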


\begin{proof} 	It is sufficient to establish the first formula, the remaining ones follow as in the previous proposition. For this purpose we write the Euler polynomials in terms of the \textit{Euler numbers} $E_n$ as 
	$$E_n(x)=\sum_{k=0}^n \binom{n}{k} \frac{i^k}{2^k} E_k \left(x-\frac{1}{2}\right)^{n-k}\text{ where }\quad   \frac{2}{e^t+e^{-t}}=\sum_{n=0}^\infty i^n E_{n}\frac{t^n}{n!}.$$ In fact,  $\sum_{n=0}^\infty E_n(x)\frac{t^n}{n!}={2e^{xt}}/(e^t+1)={2e^{(x-1/2)t}}/(e^{t/2}+e^{-t/2})$. Also note that $E_{2j+1}=0$, $j\geq0$. Now, by Example \ref{Example Traslations}, it is enough to show that the operator $$L'(p)=\int_0^{+\infty} \frac{p\left(\frac{is}{2}\right)+p\left(-\frac{is}{2}\right)}{e^{\pi s/2}+e^{-\pi s/2}} ds,$$ satisfies $L'(e^{xt})=2/(e^{t/2}+e^{-t/2})$. In fact, $L=L'\circ T_{-1/2}$ and therefore $L(e^{xt})=e^{-t/2}L'(e^{xt})=2/(e^t+1)$ as required. It is clear that $L'(x^{2j+1})=0$ since these are odd functions. For the even powers we also find \cite[24.7.6]{OlverLozier} $$L'(x^{2j})=\frac{(-1)^j}{2^{2j-1}}\int_0^{+\infty} \frac{s^{2j}}{e^{\pi s/2}+e^{-\pi s/2}} ds=(-1)^j\frac{E_{2j}}{2^{2j}},$$ as required.
\end{proof}

\begin{nota} Although the integral representations of Euler numbers  $$E_{2j}=2\int_{0}^{+\infty} \frac{s^{2j}}{e^{\pi s/2}+e^{-\pi s/2}}ds=\left(\frac{2}{\pi}\right)^{2j+1}\int_0^{+\infty} \frac{\ln(u)^{2j}}{u^2+1}du,$$ are known (here $u=e^{\pi s/2}$), we find instructive to include a simple proof using calculus of residues. Indeed, we can find $A_n=(2/\pi)^{n+1}\int_0^{+\infty}\frac{\ln(u)^n}{1+u^2}du$, recursively: using the branch of the logarithm $\log(z)=\ln|z|+i\text{arg}(z)$ with $-\pi/2<\text{arg}(z)<3\pi/2$, the Residue Theorem shows that $\int_{\gamma_{\epsilon,R}} \frac{\log(z)^n}{z^2+1}dz=2\pi i\,\text{Res}(\log(z)^n/(1+z^2),i)={i^n\pi^{n+1}}/{2^n}$. Here $0<\epsilon<1$, $R>1$ and $\gamma_{\epsilon,R}$ is the path formed by the segments from $-R$ to $-\epsilon$ and $\epsilon$ to $R$, and the corresponding semicircles centered at $0$ of radius $\epsilon$ and $R$, oriented positively. Letting $\epsilon\rightarrow0^+$ and $R\rightarrow+\infty$ the integral over the arcs tends to $0$ and we obtain $$\int_0^{+\infty}\frac{\ln(u)^n}{1+u^2}du+\int_0^{+\infty}\frac{(\ln(u)+i\pi)^n}{1+u^2}du=\frac{i^n\pi^{n+1}}{2^n}.$$ Thus the sequence $A_n$ satisfies  $A_n+\sum_{k=0}^{n-1} \binom{n}{k}i^k 2^k A_{n-k}=2i^n.$ If we set $A(t)=\sum_{n=0}^\infty A_n t^n/n!$, this recursion is equivalent to the equation $A(t)+A(t)e^{2it}=2e^{it}$. Consequently, $A(t)=2/(e^{it}+e^{-it})=\sum_{n=0}^\infty (-1)^n E_n t^n/n!$ and  $A_n=(-1)^n E_n$ as needed.
\end{nota}

Now we proceed to extend the integral representation and characterization of Hermite polynomials that are essentially the only Appell orthogonal sequence \cite{Shohat}. The following generalization is meant in the context of $d-$orthogonality, see \cite{Douak96} and the references therein. But first we need a remark.

\begin{nota}\label{Nota Ramification} Given $L\in\C[x]^\ast$ and an integer $m\geq1$, we can construct a functional recording only the moments of $L$ indexed by multiples of $m$. Indeed, recalling Example \ref{Example Traslations} and fixing the $m$-th root of unity $\omega_{m}:=e^{2\pi i/m}$, we see that the functional $$L_m=\frac{1}{m}(L+L\circ \mathcal{H}_{\omega_m^{-1}}+\cdots+L\circ \mathcal{H}_{\omega_m^{-(m-1)}})$$ has moments $L_m(x^{nm})=L_{nm}$ and equal to zero otherwise, i.e., $L_m(e^{xt})=\sum_{n=0}^\infty  L_{nm}\frac{t^{nm}}{(nm)!}.$ This can be checked using the identity $1+\omega_{m}^j+\cdots+\omega_{m}^{j(m-1)}=0$, valid for $j=1,\dots,m-1$.
\end{nota}

\begin{eje}[Hermite polynomials] Fix an integer $d\geq1$. We shall describe an analytic expression for the functional defining the Appell sequence determined by the expansion $$\exp\left( xt-t^{d+1}\right)=\sum_{n=0}^\infty {H}_n^{(d)}(x)\frac{t^n}{n!},$$ which correspond to a particular case of Gould-Hopper polynomials \cite{Gould}. Our approach is based on Ecalle's accelerator operators familiar in the theory of multisummability of power series, see \cite[Chapter  11]{Balser}. To this end, we recall the \textit{accelerator function}  \begin{equation}\label{Eq. Ca}
	C_\a(z):=\frac{1}{\pi}\sum_{n=0}^\infty \sin\left(\frac{(n+1)\pi}{\beta}\right)\Gamma\left(\frac{n+1}{\a}\right) \frac{z^n}{n!},\quad \text{ where } \a>1,  \frac{1}{\a}+\frac{1}{\beta}=1,
	\end{equation} and $\Gamma$ is the Gamma function. The map $C_\a$ is entire and satisfies $|C_\a(z)|\leq c_1\exp(-c_2|z|^\beta)$ on each sector $|\text{arg}(z)|\leq \theta/2<\pi/(2\beta)$, for certain constants $c_j=c_j(\a,\theta)>0$, $j=1,2$. Then, given $k'>k>0$, the \textit{acceleator operator} of index $(k',k)$,  $\mathcal{A}_{k',k}(p)(z):=z^{-k} \int_0^{+\infty} p(s) C_{k'/k}((s/z)^k) ds^k$ is well-defined for all polynomials $p\in\C[z]$. Its importance relies on the fact that $\mathcal{A}_{k',k}(z^n)=\frac{\Gamma(1+n/k)}{\Gamma\left(1+{n}/{k'}\right)}z^n$, for all $n\geq0$. Choosing $k=1$, $k'=d+1$, and $z=1$, we find that  
	$$\mathcal{A}_{d+1,1}(p):=\int_0^{+\infty} p(s) C_{d+1}(s)ds\quad \text{ has moments }\quad \mathcal{A}_{d+1,1}(x^n)=\frac{n!}{\Gamma\left(1+\frac{n}{d+1}\right)}.$$ Therefore, if $\omega_{d+1}=e^{2\pi i/(d+1)}$, Remark  \ref{Nota Ramification} proves that the functional \begin{equation}\label{Eq. Ad+1}
	\mathcal{A}^{d+1}(p):=\frac{1}{d+1}\int_0^{+\infty} \left(p(s)+p(\omega_{d+1}s)+\cdots+p(\omega_{d+1}^{d}s)\right) C_{d+1}(s)ds
	\end{equation}  satisfies  $\mathcal{A}^{d+1}(e^{xt})=\exp(t^{d+1}).$ Moreover, for $\lambda\neq0$, Example \ref{Example Traslations} shows that $\mathcal{A}^{d+1}\circ \mathcal{H}_\lambda$ produces the sequence $\{\lambda^{-n} H_n^{(d)}(\lambda x)\}$ having $\exp(xt-(t/\lambda)^{d+1})$ as generating series. In particular, if we choose $\lambda^{d+1}=-1$, say $\lambda=e^{i\pi/(d+1)}$, the generating series would be $\exp(xt+t^{d+1})$. These considerations establish the following.
	
	\begin{prop}\label{Prop. Hnd} The sequence $\{H_n^{(d)(x)}\}_{n\geq0}$ having $\exp(xt-t^{d+1})$ as exponential generating series is characterized as the Appell sequence relative to $\mathcal{A}^{d+1}$ given by (\ref{Eq. Ad+1}) and satisfying \begin{equation}\label{Eq. Hnd}
		\int_0^{+\infty} \sum_{j=0}^d {H}_n^{(d)}\left(x+e^{\frac{2\pi ij}{d+1}}s\right)C_{d+1}(s)ds=(d+1)x^n,\end{equation} where $C_{d+1}$ is Ecalle's accelerator function (\ref{Eq. Ca}). Moreover, $H_n^{(d)}$ admits the integral representation \begin{equation}\label{Eq. Hnd inverse}
		H_n^{(d)}(x)=\frac{1}{d+1} \int_0^{+\infty} \sum_{j=0}^d \left(x+e^{\frac{2\pi i(j-1/2)}{d+1}}s\right)^n C_{d+1}(s)ds.
		\end{equation}	
	\end{prop}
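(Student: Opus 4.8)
The plan is to obtain the proposition as a fairly direct consequence of Theorem~\ref{Thm New} and Example~\ref{Example Traslations}, since the analytic content --- the bound on $C_{d+1}$ making the accelerator integrals absolutely convergent, together with the identity $\mathcal{A}^{d+1}(e^{xt})=\exp(t^{d+1})$ --- has already been secured in the discussion preceding the statement.

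First I would verify that $\{H_n^{(d)}\}_{n\geq0}$ is indeed an Appell sequence: differentiating $\exp(xt-t^{d+1})=\sum_n H_n^{(d)}(x)t^n/n!$ in $x$ gives $(H_n^{(d)})'=nH_{n-1}^{(d)}$ and $H_0^{(d)}=1$. Because $\mathcal{A}^{d+1}(e^{xt})=\exp(t^{d+1})$, all moments of $\mathcal{A}^{d+1}$ are finite and $\mathcal{A}^{d+1}(1)=1\neq0$, so $\mathcal{A}^{d+1}$ falls under Theorem~\ref{Thm New}; the associated series is $C(t)=1/\mathcal{A}^{d+1}(e^{xt})=\exp(-t^{d+1})$, so the $(\mathfrak{L},L)$--Appell sequence with $L=\mathcal{A}^{d+1}$ has generating series $C(t)e^{xt}=\exp(xt-t^{d+1})$ and hence equals $\{H_n^{(d)}\}$. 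Then (\ref{Eq. Hnd}) is exactly the mean value property $\mathcal{A}^{d+1}(T_{x_0}(H_n^{(d)}))=x_0^n$ of Theorem~\ref{Thm New}: inserting the definition (\ref{Eq. Ad+1}) of $\mathcal{A}^{d+1}$, rewriting $\omega_{d+1}^{j}s+x_0$ as $x_0+e^{2\pi ij/(d+1)}s$, and clearing the factor $1/(d+1)$ gives the displayed identity; Theorem~\ref{Thm New} likewise yields the claimed uniqueness.

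For the integral representation (\ref{Eq. Hnd inverse}), the idea is to identify $\mathfrak{L}^{-1}$ with $\mathfrak{j}(M)$ for a concrete accelerator-type functional $M$. Put $\lambda=e^{i\pi/(d+1)}$, so $\lambda^{d+1}=-1$, and set $M:=\mathcal{A}^{d+1}\circ\mathcal{H}_\lambda$. By Example~\ref{Example Traslations} applied to the homothecy $\mathcal{H}_\lambda$, $M(e^{xt})=\mathcal{A}^{d+1}(e^{xt/\lambda})=\exp((t/\lambda)^{d+1})=\exp(-t^{d+1})$, the reciprocal of the indicator series of $\mathcal{A}^{d+1}$. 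Since $\iota_\partial$ converts composition of shift-invariant operators into multiplication of their indicator series, $\mathfrak{j}(M)\circ\mathfrak{L}$ has indicator series $\exp(-t^{d+1})\exp(t^{d+1})=1$, and as $\Sigma$ is commutative and $\mathfrak{L}$ is invertible, this forces $\mathfrak{j}(M)=\mathfrak{L}^{-1}$. Because $\mathfrak{L}(H_n^{(d)})=x^n$, Lemma~\ref{Prop. LL} gives $H_n^{(d)}(x)=\mathfrak{L}^{-1}(x^n)=\mathfrak{j}(M)(x^n)=M(T_x(x^n))$; expanding $M=\mathcal{A}^{d+1}\circ\mathcal{H}_\lambda$ through (\ref{Eq. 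Ad+1}) and simplifying $\omega_{d+1}^{j}/\lambda=e^{2\pi i(j-1/2)/(d+1)}$ produces (\ref{Eq. Hnd inverse}).

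I do not expect a real obstacle. The two points needing care are: checking that the indicator series of $M$ really is $1/\mathcal{A}^{d+1}(e^{xt})$ --- which rests on the symbol computation $\mathcal{A}^{d+1}(e^{xt})=\exp(t^{d+1})$ and on the finiteness of moments guaranteed by the super-polynomial decay of $C_{d+1}$, so that $\mathfrak{j}(M)$ is a genuine two-sided inverse of $\mathfrak{L}$ --- and the bookkeeping with the $(d+1)$st roots of unity when passing between $\omega_{d+1}^{j}$ and $e^{2\pi i(j-1/2)/(d+1)}$. Both are routine given what has already been set up.
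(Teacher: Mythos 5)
Your proposal is correct and follows essentially the same route as the paper, whose proof is the discussion preceding the proposition: the identity $\mathcal{A}^{d+1}(e^{xt})=\exp(t^{d+1})$ plus Theorem \ref{Thm New} gives the characterization (\ref{Eq. Hnd}), and composing with the homothecy $\mathcal{H}_\lambda$, $\lambda=e^{i\pi/(d+1)}$, via Example \ref{Example Traslations} realizes $\mathfrak{L}^{-1}$ and yields (\ref{Eq. Hnd inverse}). Your extra detail on why $\mathfrak{j}(\mathcal{A}^{d+1}\circ\mathcal{H}_\lambda)=\mathfrak{L}^{-1}$ (symbols multiplying to $1$ under the ring isomorphism) is exactly the implicit step the paper leaves to the reader.
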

	
\
	
On the other hand, if in the previous paragraph we take $\lambda=\lambda_d:=(d!(d+1)^2)^{1/(d+1)}$ we recover the family $\hat{H}_n(x;d):=\lambda_d^{-n}H_n^{(d)}(\lambda_d x)$ of \textit{Hermite-type-$d$-orthogonal} polynomials, corresponding to the notion of $d-$orthogonality  \cite{Douak96}. In the case $d=1$ and $\a=\beta=2$ in (\ref{Eq. Ca}), after a direct calculation using the values $\Gamma(n+1/2)=(2n)!/(4^nn!)\sqrt{\pi}$, $n\geq0$, we find the familiar function $C_2(z)=\exp(-z^2/4)/\sqrt{\pi}$. Then, taking $\lambda=\sqrt{2}$ we find the functional $$(\mathcal{A}^2\circ \mathcal{H}_{\sqrt{2}})(p)=\frac{1}{2}\int_0^{+\infty} \left(p(s/\sqrt{2})+p(-s/\sqrt{2})\right) C_2(s)ds=\frac{1}{\sqrt{2\pi}} \int_{-\infty}^{+\infty} p(s) e^{-s^2/2}ds,$$ corresponding to the Weierstrass operator \cite[p. 746]{Drissi} who induces the classical Hermite polynomials $\{H\!e_n(x)\}_{n\in\N}$ with generating series $\exp(xt-{t^2}/{2})$. Finally, equations (\ref{Eq. Hnd}) and (\ref{Eq. Hnd inverse}) take the familiar form \cite[p. 254]{Magnuns}  $$x^n=\frac{1}{\sqrt{2\pi}}\int_{-\infty}^{+\infty} H\!e_n(x+s) e^{-s^2/2}ds,\qquad H\!e_n(x)=\frac{1}{\sqrt{2\pi}} \int_{-\infty}^{+\infty} (x+is)^n e^{-s^2/2}ds.$$ 
\end{eje}

We conclude with one final worked example in relation with functionals induced by entire functions.

\begin{eje} Given an entire function $F(z)=\sum_{n=0}^\infty f_n z^n$ and $\lambda\in\C$  such that $F(\lambda)\neq 0$, let $$L_{F,\lambda}(p)=\frac{1}{F(\lambda)} \sum_{k=0}^\infty p(k) f_k \lambda^k,\quad\text{ and }\quad  \mathfrak{L}_{F,\lambda}(p)(x)=\frac{1}{F(\lambda)} \sum_{k=0}^\infty p(x+k) f_k \lambda^k.$$ To see these are well-defined we use the differential operator $\delta=z\frac{\d}{\d z}$ to note that $\delta^j(F)(z)=\sum_{k=1}^\infty k^j f_k z^k$, $j\geq1$ are again entire and thus  $${L}_{F,\lambda}(x^n)= \frac{\delta^n(F)(\lambda)}{F(\lambda)},\quad \text{ and }\quad \mathfrak{L}_{F,\lambda}(x^n)(x)=\sum_{j=0}^n \binom{n}{j} \frac{\delta^j(F)(\lambda)}{F(\lambda)} x^{n-j}.$$ Moreover, $L_{F,\lambda}(e^{xt})=\sum_{k=0}^\infty e^{kt} f_k\lambda^k/{F(\lambda)}={F(\lambda e^{t})}/{F(\lambda)}$. In this way we obtain an Appell sequence $\{P_{F,\lambda,n}(x)\}_{n\in\N}$ characterized by  the equation $$\sum_{k=0}^\infty P_{F,\lambda,n}(x+k) f_k \lambda^k=F(\lambda)x^n.$$ If $F$ has no zeros in the complex plane, $1/F(z)=\sum_{k=0}^\infty f_k' z^k$ is again entire and $L_{1/F,\lambda}(e^{xt})=F(\lambda)/F(\lambda e^t)$. Therefore, $\mathfrak{L}_{F,\lambda}^{-1}=\mathfrak{L}_{1/F,\lambda}$ and we can invert the previous equation to obtain \begin{equation}\label{Eq. P F lambda n}
	P_{F,\lambda,n}(x)=F(\lambda)\sum_{k=0}^\infty (x+k)^n f_k'\lambda^k=\sum_{j=0}^n \binom{n}{j} \frac{\delta^j(1/F)(\lambda)}{(1/F)(\lambda)}x^{n-j}.
	\end{equation} 
	
	Examples of this situation are given by $F(z)=\exp(P(z))$, where $P$ is a non-constant polynomial as considered by Touchard \cite{Touchard}. The best-known case corresponds to $F(z)=e^z$, for which $$L_{e^z,\lambda}(e^{xt})=\exp(\lambda(e^t-1))=\sum_{k=0}^\infty T_k(\lambda)\frac{t^k}{k!},\quad \text{ where } T_k(\lambda)=e^{-\lambda}\cdot{\delta^n(e^z)(\lambda)}=\sum_{k=0}^n S(n,k) \frac{\lambda^k}{k!}$$ are the \textit{exponential} or \textit{Touchard} polynomials \cite[p. 63]{Roman} --recall equation (\ref{Eq. Stirling numbers})--. In particular, (\ref{Eq. P F lambda n}) takes the form $P_{e^z,\lambda,n}(x)=\sum_{j=0}^n \binom{n}{j} \sum_{l=0}^j S(j,l) {(-\lambda)^l}x^{n-j}/{l!}$.  \cite[Example 4.4]{Borwein}
\end{eje}

Table \ref{Table} contains more examples of Appell sequences as N\"{o}rlund \cite{Norlund}, Laguerre \cite[p. 108]{Roman}, Strodt \cite{Borwein}, and the Bernoulli-type polynomials  \cite{Tempesta} (see Lemma 1 where $a$ should be only equal to $1$). Moreover, the Bernoulli hypergeometric \cite{Hassen} and Kummer hypergeometric Bernoulli polynomials \cite{Drissi}.

\renewcommand{\arraystretch}{1.8}
\begin{table}[h]
	\caption{Some families of Appell polynomials}
	\label{Table}

	\begin{tabular}{ccc}		
		\hline
		\multicolumn{1}{|c|}{Polynomials} & \multicolumn{1}{c|}{Functional $L(p)$}                  & \multicolumn{1}{c|}{Indicator series $L(e^{xt})$ /  Characterization}                  \\ \hline
		
		\multicolumn{1}{|c|}{\makecell{Monomials\\ $(x-a)^n$}} & \multicolumn{1}{c|}{\multirow{2}{*}{\makecell{$p(a)$\\$\empty$}}} & \multicolumn{1}{c|}{\multirow{2}{*}{\makecell{$e^{at}$\\$\empty$}}} \\ \hline

		\multicolumn{1}{|c|}{\multirow{2}{*}{\makecell{Bernoulli \\ $B_n(x)$}}} & \multicolumn{1}{c|}{\multirow{2}{*}{ $\displaystyle\int_0^1 p(t)dt$ }} & \multicolumn{1}{c|}{ ${(e^t-1)}/{t}$ } \\ \cline{3-3} 
		\multicolumn{1}{|c|}{}                  & \multicolumn{1}{c|}{}                  & \multicolumn{1}{c|}{ $B_{n+1}(x+1)-B_{n+1}(x)=(n+1)x^n$ } \\ \hline

	\multicolumn{1}{|c|}{\multirow{2}{*}{\makecell{$k$th Bernoulli \\ $B_n^{(k)}(x)$}}} & \multicolumn{1}{c|}{\multirow{2}{*}{ $\displaystyle \int_{[0,1]^k} p(s_1+\cdots+
			s_k)d\boldsymbol{s}$ }} & \multicolumn{1}{c|}{ $\displaystyle (e^{t}-1)^k/t^k$ } \\ \cline{3-3} 
\multicolumn{1}{|c|}{}                  & \multicolumn{1}{c|}{}                  & \multicolumn{1}{c|}{ $\int_{[0,1]^k} B_n^{(k)}(x+s_1+\cdots+s_k)d\boldsymbol{s}=x^n$ } \\ \hline

		\multicolumn{1}{|c|}{\makecell{Bernoulli\\N\"{o}rlund}} & \multicolumn{1}{c|}{ $\quad \displaystyle\int_{[0,1]^k} p(\omega_1s_1+\cdots+\omega_k 
			s_k)d\boldsymbol{s}\quad $ } & \multicolumn{1}{c|}{$ \prod_{j=1}^k \frac{e^{\omega_j t}-1}{\omega_j t}$} \\ \cline{2-3} 
		\multicolumn{1}{|c|}{$\quad B_n^{(k)}(x|\omega)\quad $} & \multicolumn{1}{c|}{$\omega=(\omega_1,\dots,\omega_k)\in\C^k$} & \multicolumn{1}{c|}{$\quad \int_{[0,1]^k} B_n^{(k)}(x+\sum_{j=1}^k \omega_js_j\,|\,\omega)d\boldsymbol{s}=x^n\quad $} \\ \hline

		\multicolumn{1}{|c|}{\multirow{2}{*}{\makecell{Apostol\\Euler \\ $E_n(\beta;x)$}}} & \multicolumn{1}{c|}{\multirow{2}{*}{ $\displaystyle (1-\beta)p(0)+\beta p(1)$ }} & \multicolumn{1}{c|}{ $\displaystyle 1+\beta(e^t-1)$ } \\ \cline{3-3} 
		\multicolumn{1}{|c|}{}                  & \multicolumn{1}{c|}{}                  & \multicolumn{1}{c|}{ $(1-\beta)E_n(\beta;x)+\beta E_n(\beta;x+1)=x^n$ } \\ \hline

		\multicolumn{1}{|c|}{\multirow{2}{*}{\makecell{Euler \\ $E_n(x)$}}} & \multicolumn{1}{c|}{\multirow{2}{*}{ $\displaystyle \frac{p(0)+p(1)}{2}$ }} & \multicolumn{1}{c|}{ $\displaystyle {(1+e^t)}/{2}$ } \\ \cline{3-3} 
		\multicolumn{1}{|c|}{}                  & \multicolumn{1}{c|}{}                  & \multicolumn{1}{c|}{ $E_n(x)+E_n(x+1)=2x^n$ } \\ \hline

			\end{tabular}
	\end{table}

\renewcommand{\arraystretch}{1.8}
\begin{table}[]

\begin{tabular}{ccc}	

\hline
	
		\multicolumn{1}{|c|}{ \makecell{$k$th Apostol\\Euler} } & \multicolumn{1}{c|}{\multirow{2}{*}{ $\displaystyle\sum_{j=0}^k \binom{k}{j} \beta^j (1-\beta)^{k-j} p(j)$ } } & \multicolumn{1}{c|}{ $\displaystyle \left(1+\beta (e^t-1)\right)^{k}$} \\ \cline{3-3} 
	\multicolumn{1}{|c|}{ $E_n^{(k)}(\beta;x)$ } & \multicolumn{1}{c|}{}                  & \multicolumn{1}{c|}{ $\sum_{j=0}^k \binom{k}{j} \beta^j (1-\beta)^{k-j} E_n^{(k)}(\beta;x+j)=x^n$ }                  
	\\ \hline

	\multicolumn{1}{|c|}{\makecell{Euler\\N\"{o}rlund}} & \multicolumn{1}{c|}{ $\displaystyle 2^{-k} \sum_{n_j\in\{0,1\}}  p\Big(\sum_{j=1}^k n_j\omega_j\Big)$ } & \multicolumn{1}{c|}{ $ 2^{-k}\prod_{j=1}^k (e^{\omega_j t}+1)$} \\ \cline{2-3} 
	\multicolumn{1}{|c|}{$E_n^{(k)}(x\,|\,\omega)$} & \multicolumn{1}{c|}{$\omega=(\omega_1,\dots,\omega_k)\in\C^k$} & \multicolumn{1}{c|}{$\sum_{n_j\in\{0,1\}}  E_n^{(k)}\left(x+\sum_{j=1}^k n_j\omega_j \,|\,\omega\right)=2^{k} x^n$} \\ \hline

	\multicolumn{1}{|c|}{\multirow{2}{*}{\makecell{$w$-Strodt\\$S_{n,w}(x)$}}} & \multicolumn{1}{c|}{ $\sum_{j=1}^{N} w_j p\left(x_j\right)$ } & \multicolumn{1}{c|}{ $\sum_{j=1}^N w_j e^{x_jt}$ } \\ \cline{2-3} 
	\multicolumn{1}{|c|}{  }                  & \multicolumn{1}{c|}{ $x_j\in\R$, $0<w_j<1$,  $\sum_{j=1}^N w_j=1$ } & \multicolumn{1}{c|}{ $\sum_{j=0}^{n-1} S_{n,w}\left(x+w_j\right)=x^n$ } \\ \hline

	\multicolumn{1}{|c|}{ Bernoulli-type } & \multicolumn{1}{c|}{ $\sum_{j=l}^{m} a_j\int_0^j p(s)ds$ } & \multicolumn{1}{c|}{ $\sum_{j=l}^{m} a_je^{jt}/t$ } \\ \cline{2-3} 
	\multicolumn{1}{|c|}{ $B_{n,1}^{(m-l)}(x)$ } & \multicolumn{1}{c|}{ $l,m\in\Z,\, \sum_{j=l}^m a_j=0,\,\,  \sum_{j=l}^m ja_j=1$ } & \multicolumn{1}{c|}{ $\sum_{j=l}^{m} a_j B_{n,1}^{(m-l)}(x+j)=nx^{n-1}$ } \\ \hline

	\multicolumn{1}{|c|}{ Hermite } & \multicolumn{1}{c|}{\multirow{2}{*}{  $\displaystyle \frac{1}{\sqrt{2\pi}}\int_{-\infty}^{+\infty} p(s) {e^{-s^2/2}} ds$ } } & \multicolumn{1}{c|}{ $\displaystyle \exp(t^2/2)$ } \\ \cline{3-3} 
	\multicolumn{1}{|c|}{ $H\!e_n(x)$ } & \multicolumn{1}{c|}{}                  & \multicolumn{1}{c|}{ $\int_{-\infty}^{+\infty} H\!e_n(x+s) e^{-s^2/2}ds=\sqrt{2\pi}x^n$ }                  
	\\ \hline
	
	\multicolumn{1}{|c|}{ $d$-Hermite } & \multicolumn{1}{c|}{\multirow{2}{*}{  $\displaystyle\int\limits_0^{+\infty} \sum_{j=0}^d p\left(e^{\frac{2\pi ij}{d+1}}s\right) \frac{C_{d+1}(s)}{d+1}ds$ } } & \multicolumn{1}{c|}{ $\exp(t^{d+1})$, $d\geq0$  integer  } \\ \cline{3-3} 
	\multicolumn{1}{|c|}{ ${H}_n^{(d)}(x)$ } & \multicolumn{1}{c|}{}                  & \multicolumn{1}{c|}{ $\int\limits_0^{+\infty} \sum_{j=0}^d {H}_n^{(d)}(x+e^{\frac{2\pi ij}{d+1}}s)\frac{C_{d+1}}{d+1}(s)ds=x^n$ }                  
	\\ \hline

	\multicolumn{1}{|c|}{ Laguerre } & \multicolumn{1}{c|}{\multirow{2}{*}{  $\displaystyle \int\limits_0^{+\infty} p(s) \frac{s^\a e^{-s}}{\Gamma(1+\a)} ds$, $\text{Re}(\a)>-1$ } } & \multicolumn{1}{c|}{ $\displaystyle (1-t)^{-\a-1}$   } \\ \cline{3-3} 
	\multicolumn{1}{|c|}{ $(-1)^n n! L_n^{(\a-n)}(x)$ } & \multicolumn{1}{c|}{}                  & \multicolumn{1}{c|}{ $\int_0^{+\infty} L_n^{(\a-n)}(x+t)\frac{t^\a e^{-t}}{\Gamma(1+\a)}dt=\frac{(-1)^n x^n}{n!}$ }                  
	\\ \hline
		
		\multicolumn{1}{|c|}{ \makecell{Bernoulli  \\ hypergeometric}} & \multicolumn{1}{c|}{\multirow{2}{*}{   $\displaystyle N\int_0^1 p(s) (1-s)^{N-1}ds$, $N\geq1$} } & \multicolumn{1}{c|}{ $ (e^t-\sum_{j=0}^{N-1} t^j/j!)/(t^N/N!)$ } \\ \cline{3-3} 
		\multicolumn{1}{|c|}{ $B_{N,n}(x)$ } & \multicolumn{1}{c|}{}                  & \multicolumn{1}{c|}{ $N\int_0^1 B_{N,n}(x+s)(1-s)^{N-1}ds=x^n$ }                  
		\\ \hline

		\multicolumn{1}{|c|}{ \makecell{Kummer \\ hypergeometric} } & \multicolumn{1}{c|}{ $  \frac{\Gamma(a+b)}{\Gamma(a)\Gamma(b)}\int_0^1p(s)s^{a-1}(1-s)^{b-1}ds$ } & \multicolumn{1}{c|}{ $1+\sum_{n=1}^\infty \frac{a(a+1)\cdots(a+n-1)}{(a+b)(a+b+1)\cdots(a+b+n-1)}\frac{t^n}{n!}$ } \\ \cline{2-3} 
		\multicolumn{1}{|c|}{ $B_{a,b,n}(x)$ } & \multicolumn{1}{c|}{ $\text{Re}(a), \text{Re}(b)>0$ } & \multicolumn{1}{c|}{ $\int_0^1 B_{a,b,n}(x+s)\frac{s^{a-1}}{\Gamma(a)}\frac{(1-s)^{b-1}}{\Gamma(b)}ds=\frac{x^n}{{\Gamma(a+b)}}$ } \\ \hline

		\multicolumn{1}{|c|}{\multirow{2}{*}{$P_{F,\lambda,n}(x)$}} & \multicolumn{1}{c|}{$\displaystyle  \frac{1}{F(\lambda)} \sum_{k=0}^\infty   \frac{p(x+k)}{k!}F^{(k)}(0)\lambda^k$} & \multicolumn{1}{c|}{$\displaystyle F(\lambda e^t)/F(\lambda)$} \\ \cline{2-3} 
		\multicolumn{1}{|c|}{}                  & \multicolumn{1}{c|}{ $F:\C\to\C$ entire, $F(\lambda)\neq 0$ } & \multicolumn{1}{c|}{$\sum_{k=0}^\infty P_{F,\lambda,n}(x+k) \frac{F^{(k)}(0)\lambda^k}{k!} =F(\lambda)x^n$} \\ \hline
		
	\end{tabular}
\end{table}

\bibliographystyle{plain}
\bibliography{Carrillo_AppellSheffer}


\end{document}